\theoremstyle{plain}
\newtheorem{thm}{Theorem}[section]
\newtheorem{lem}[thm]{Lemma}
\theoremstyle{remark}
\newtheorem{rem}[thm]{Remark}
\theoremstyle{definition}
\newtheorem{nota}[thm]{Notation}
\newtheorem{dfn}[thm]{Definition}
\newtheorem{conj}[thm]{Conjecture}
\newtheorem{hypo}[thm]{Assumption}
\newcommand{\bbR}{\mathbb R}
\newcommand{\bi}{\mathbf i}
\newcommand{\br}{\mathbf r}
\newcommand{\bs}{\mathbf s}
\newcommand{\bt}{\mathbf t}
\newcommand{\bx}{\mathbf x}
\newcommand{\cG}{\mathcal G}
\newcommand{\rA}{\mathrm A}
\newcommand{\rE}{\mathrm E}
\newcommand{\rG}{\mathrm G}
\newcommand{\rM}{\mathrm M}
\newcommand{\Supp}{\mathrm {Supp} \:}
\newcommand{\Span}{\mathrm {Span} \:}
\DeclareMathOperator{\id}{i d}
\newcommand{\Forall}{\forall \:}
\begin{document}
\title[Regularity of heat kernels]{Exponential coordinates and regularity of groupoid heat kernels}

\author[B.K. So]{Bing Kwan SO}

\begin{abstract}
We prove that on an asymptotically Euclidean boundary groupoid, 
the heat kernel of the Laplacian is a smooth groupoid pseudo-differential operator.
\end{abstract}

\maketitle

\section{Introduction}
In this notes we study the regularity of certain groupoid heat kernels.

\subsection{The problem}
We begin with stating our problem, recalling some basic notations along the way. 
Our motivation will be explained in the next section. 

First of all, we briefly recall the notion of geoupoid pseudo-differential operators and heat kernels.
The theory is developed by Nistor, Weinstein and Xu \cite{NWX;GroupoidPdO} and is considered classical.

Let $\cG \rightrightarrows \rM $ be a Lie groupoid. We shall assume $\cG$ is Hausdorff.
Denote the source and the target maps by $\bs $ and $\bt $ respectively.
We shall use the convention $\bs (a b) = \bs (b), \forall a, b \in \cG, \bt (b) = \bs (a)$.
For each $x \in \rM$, write $\cG _x := \bs ^{-1} (x) $ for the $\bs$-fiber over $x$,
and $T ^\bot \cG := \{ X \in T \cG : d \bs (X ) = 0 \}$.
Note that $T ^\bot \cG $ is just the tangent space of all the $\cG _x $.

\begin{dfn}
A pseudo-differential operator $\varPsi $ on a groupoid $\cG$ of order $\leq m$ 
is a smooth family of pseudo-differential operators $\{ \varPsi _x \}_{x \in \rM}$,
where $\varPsi _x \in \Psi ^m ( \cG _{x} )$,
and satisfies the right invariance property
$$ \varPsi _{\bs (a)} (\br _a^* f) = \br _g^* \varPsi _{\bt (a)} (f), 
\quad \forall a \in \cG, f \in C^\infty_c (\cG _{\bs (a)}).$$
If, in addition, all $\varPsi _x $ are classical of order $m$, then we say that $\varPsi $ is classical of order $m$.
\end{dfn}

\begin{dfn}
The operator $\varPsi $ is called uniformly supported if the set
$$ \{ a b^{-1} : (a, b) \in \Supp (\varPsi) \} $$
is a compact subset of $\cG$.
\end{dfn}

One particularly interesting groupoid differential operator is the Laplacian.
Let $\rA \to \rM$ be the Lie algebroid of $\cG$ with anchor map $\nu : \rA \to T \rM $,
and $\rE $ be a vector bundle over $\rM$.
Let $g _\rA $ be a Riemannian metric on $\rA $ and $\nabla ^\rE $ be an $\rA $-connection on $\rE $
(for the definition of an $\rA$-connection and other details, see \cite{Fern'd;HoloAndChar}). 
By right invariance, one obtains a (family of) Riemannian metric and connection, 
which we still denote by $\cG _\rA $ and $\nabla ^\rE $ respectively, on $\cG _x $.
\begin{dfn}
\label{LapDfn}
The {\it Laplacian} $\Delta ^\rE $ is the family of operators $\{ \Delta ^\rE _x \}_{x \in \rM}, $
where
$$ \Delta ^\rE _x := \sum _{i=1}^n (\nabla ^\rE _{X_i } \nabla ^\rE _{X_i } - \nabla ^\rE _{\nabla ^\rE _{X_i} X_i}), $$
and $X_i$ is any local orthonormal basis of $T \cG _x $.
\end{dfn}
Note that $\Delta ^\rE $ is elliptic, and its principal symbol does not depend on the chosen connection $\nabla ^\rE$.

In the shortest term, the heat kernel of $\Delta ^\rE $ is defined to be
\begin{dfn}
The family $Q := \{ Q _x \} $, where for each $x \in \rM $, $Q _x $ is the heat kernel of $\Delta _x ^\rE $,
is called the {\it heat kernel} of $\Delta ^\rE $. 
\end{dfn}

Equivalently, one can describe the operator $Q$ as a reduced kernel.
\begin{dfn}
\label{RedKer}
For any $\varPsi = \{ \varPsi _x \}_{x \in \rM} \in \Psi ^\infty (\cG)$.
The {\it reduced kernel} of $\varPsi $ is defined to be the distribution
$$ K_\varPsi (f) :=  \int _\rM \mathbf u^* (\varPsi (\mathbf i^* f)) (x) \: \mu_\rM (x), 
\quad f \in C^\infty _c (\cG),$$
where $\mathbf i$ and $\mathbf u$ denote respectively the inversion and unit inclusion.
\end{dfn}

The important point here to note is that,
while each $Q _x $ is defined by a smooth kernel (not compactly supported),
$Q _x $ may not be a smooth family. 
In terms of reduced kernel, it is not clear that the reduced kernel of $Q$ is smooth,
although it is easy to see that $Q $ is continuous.

Therefore, the regularity problem amounts to proving
\begin{conj}
\label{MainProblem}
Given certain groupoid $\cG \rightrightarrows \rM $ with $\rM$ compact, 
the reduced heat kernel of $Q$ is smooth. 
\end{conj}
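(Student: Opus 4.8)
For an asymptotically Euclidean boundary groupoid the plan is to realize $Q$ as a near-unit heat parametrix corrected by a rapidly vanishing remainder, working throughout inside a class of right-invariant operators whose reduced kernels are smooth on $\cG$ and Schwartz along the $\bs$-fibers. Because each $Q_x$ is smoothing but never uniformly supported, one cannot remain in the calculus of \cite{NWX;GroupoidPdO}; so the preliminary task is to fix this larger ``Schwartz groupoid calculus'', to verify that its reduced kernels are indeed smooth, and to show that it is closed under fiberwise composition and under convolution in the heat-time variable. The role of the asymptotically Euclidean hypothesis is precisely to make this calculus at once wide enough to contain $Q$ and narrow enough to force smoothness of $K_Q$.

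First I would use $g_\rA$ and the fiberwise exponential map to identify a neighborhood of the unit section of $\cG$ with a neighborhood of the zero section of $\rA$, and then argue that, under the asymptotically Euclidean condition, the fiberwise distance $r(x,v) = d_{\cG_x}(\mathbf u(x), \exp_x v)$, the Riemannian density, and the coefficients of $\Delta^\rE_x$ in this chart all extend to functions of $(x,v)$ that are smooth up to $\partial\rM$ and uniformly asymptotic to the flat model as $|v|\to\infty$; along the way one records completeness, hence essential self-adjointness, of $\Delta^\rE_x$, so that $Q_x$ exists with $Q_x(t,\cdot,\cdot)$ smooth. Then, following the Minakshisundaram--Pleijel construction fiberwise, set
\[
\Xi_N(t,x,v) = (4\pi t)^{-n/2}\, e^{-r(x,v)^2/4t} \sum_{j=0}^{N} t^{j}\, \phi_j(x,v),
\]
solving the transport equations for the $\phi_j$ along fiberwise geodesics --- this is where smoothness of the exponential chart in $x$ enters --- and cut off to the region where the chart is valid, patching in the Euclidean model heat kernel near fiber infinity. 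A direct computation gives $(\partial_t - \Delta^\rE)\Xi_N = R_N$, with $R_N$ smooth on $(0,\infty)\times\cG$, vanishing to order $N+1-\tfrac n2$ as $t\downarrow 0$, and, by comparison with the flat model, Schwartz along the $\bs$-fibers uniformly in $x$; hence $\Xi_N(t)$ and $R_N(t)$ lie in the Schwartz groupoid calculus for each $t>0$.

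Duhamel's principle then gives $Q = \Xi_N - Q * R_N$ and, on iterating, $Q = \sum_{k\geq 0}(-1)^k\,\Xi_N * R_N^{*k}$, where $*$ denotes simultaneous convolution in $t$ and fiberwise composition of operators. Since $R_N^{*k}$ vanishes to order $k(N+1-\tfrac n2)$ at $t=0$, the series converges in the Schwartz groupoid calculus; as each term is smooth, $Q$ is a smooth, right-invariant, $\bs$-fiberwise Schwartz operator, and the $t^j$-expansion of $\Xi_N$ identifies it as classical. In particular $K_Q$ is smooth on $\cG$, which is the conjecture for asymptotically Euclidean boundary groupoids; letting $N\to\infty$ by a Borel summation yields, as a bonus, the full small-time asymptotic expansion of $Q$.

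The main obstacle is the behavior at infinity along the $\bs$-fibers over $\partial\rM$, where $\cG_x$ is a non-compact Lie group on which the heat kernel a priori has no decay: one must show that $R_N$ genuinely lies in the Schwartz groupoid calculus --- not merely that it decays on each individual fiber --- and that this calculus is stable under the Volterra convolution above, with every estimate uniform up to $\partial\rM$, where the groupoid and its metric degenerate. Constructing the controlled exponential chart in that degenerating regime, and propagating flat-model (Gaussian, then Schwartz) bounds through the iterated compositions $R_N^{*k}$, is the technical heart of the argument, and is exactly what the asymptotically Euclidean assumption is there to supply.
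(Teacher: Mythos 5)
Your overall framework --- fiberwise Levi/Minakshisundaram--Pleijel parametrix $\Xi_N$, remainder $R_N$, and the Duhamel--Volterra series $Q=\sum_k(-1)^k\,\Xi_N * R_N^{*k}$ --- is exactly the one the paper uses. But there is a genuine gap where the actual work should be: you assert that a ``Schwartz groupoid calculus'' has smooth reduced kernels and is \emph{closed under iterated convolution with uniform estimates up to the singular stratum}, and you defer the proof of that closure to ``what the asymptotically Euclidean assumption is there to supply.'' That closure statement \emph{is} the theorem. The difficulty is not formal: there is no canonical transversal to the $\bs$-fibers on such a groupoid, so differentiating a $k$-fold convolution in the direction transverse to the fibers (i.e.\ in $x$, as $\rho(x)\to 0$ and the isotropy jumps from trivial to $\bbR^q$) has no meaning until one fixes coordinates comparing nearby fibers, and one must then track how those coordinates distort under $k$ successive groupoid multiplications. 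The paper's entire technical content is precisely this: exponential coordinates $\bx^{(\alpha)}(x,\mu,\tau)$ built from flows of right-invariant vector fields, an ODE analysis of the multiplication in these coordinates, and the key estimate (Theorem~\ref{ChainGrowthLem}) that the transverse derivative of a $k$-fold product is $\partial_x+V$ with $|V|\le C e^{Mk}$, valid only on shrinking neighborhoods $\rho(x)<re^{-kH}$. The exponential growth in $k$ is then beaten by the $1/k!$ volume of the time simplex $\Sigma_k$. Nothing in your proposal produces these $k$-dependent bounds or explains why the loss per convolution factor is only exponential rather than worse.

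A secondary misdirection: you locate the main obstacle at fiber infinity (Schwartz decay of $R_N$ along non-compact fibers). In the paper $R_N$ is cut off to a neighborhood $B(\rM,\varrho_0)$ of the unit section, so each convolution factor is uniformly supported and fiber-infinity decay of $R_N$ is not an issue; the spreading of supports under $k$-fold convolution is controlled by Lemma~\ref{DEst} ($\|\tau\|\lesssim k$). The real obstacle is transverse regularity at the degenerating stratum, which your flat-model comparison does not address. You also omit the hypotheses under which the paper actually proves the conjecture (non-degeneracy or uniform degeneracy, used in Equations~(\ref{Off1})--(\ref{Off2}) to control $\nu^{-1}$ on the $\rM_0\times\rM_0$ part, plus the finite exponential atlas of Assumption~\ref{Hypo}); without some such quantitative hypothesis on the anchor map your estimates away from $\cG_1$ have no starting point. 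As it stands the proposal is a plausible plan that reproduces the paper's skeleton but leaves its load-bearing estimates unproved.
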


The main difficulty of establishing Conjecture \ref{MainProblem} is that the groupoid $\cG$ is in general non-compact,
and there is no obvious transversal structure to enable one to compare different $\bs$-fibers.

\subsection{Motivations and known results}
The most notable partial result for Conjecture \ref{MainProblem} is probably that of \cite{Heitsch;FoliHeat},
where the author proves that the heat kernel on the holonomy groupoid is smooth.
His proof uses heavily the constant rank of the (in general singular) foliation,
which in turn enables one to naturally lift the orthogonal complement of the foliation on $\rM$ to $\cG$. 

Not much is known about groupoids with non-constant rank.
The case of manifolds with boundary was proven by Melrose \cite{Melrose;Book},
using explicit construction with boundary defining functions.
Essentially the same argument was used by Albin \cite{Albin;EdgeInd}, generalizing the result to edge calculus.
However these techniques do not generalize easily to other groupoids, 
because there is no obvious definition of boundary function.

For groupoids not of the edge type, 
the only example we know is that of \cite{So;PhD}.
There, the author gives a criterion for the derivatives of the groupoid multiplication,
and then verify that the symplectic groupoid of the Bruhat Poisson structure satisfies these properties.
Our argument here will essentially follow similar lines.

Our main motivation of establishing the regularity is in line of the authors quoted above.
For example, the renormalized integral considered in 
\cite{Albin;EdgeInd} and \cite{So;PhD} requires the integrand to be differentiable.
That in turn means it is necessary to establish Conjecture \ref{MainProblem}
in order to obtain a renormalized index formula by the heat kernel method.

Another potentially interesting application of the results in this notes is on the polyhedral domain technique 
developed by Nistor et. al. 
The arguments developed in here may result in novel a-prior estimates for their numerical calculation.

Last but not the least,
this notes can been seen as a sequel of \cite{So;FullCal},
where the author attempts to generalize the classical theory of singular pseudo-differential calculus,
using groupoids only.
It appears the exponential coordinates defined here is central to many results 
(for example, smooth extension property) for boundary groupoids. 

\subsection{An overview of the argument} 
As far as we know, 
the exponential coordinates \cite{Nistor;IntAlg'oid} is the only way to construct local complements of the 
$\bs$-foliation.
In general, exponential coordinates are very arbitrary and therefore its use in estimating derivatives is limited.
In the case of boundary groupoid with exponential isotropy subgroups 
(which implies these Lie groups are solvable and simply connected),
the situation is easier as one only needs a finite cover of the singular invariant sub-manifolds.

In Section 2, we shall give a technical introduction of these exponential coordinates.
Then we study the change in coordinates formulas, 
as well as groupoid multiplication under these coordinates.

In Section 3, we first recall the construction of the heat kernel by the method of Levi parametix.
Then we directly show that on each exponential coordinate patch, 
derivatives of the Levi paramterix converges uniformly, thanks to the estimation done in Section 2. 
Hence we conclude that the groupoid heat kernel is smooth.

\section{Asymptotically Euclidean boundary groupoids}
\begin{dfn} 
Let $\cG \rightrightarrows \rM $ be a Lie groupoid with $\rM$ compact.
We say that $\cG$ is a {\it boundary groupoid} if 
\begin{enumerate}
\item
The anchor map $\nu : \rA \to T \rM $ stratifies $\rM $ into invariant sub-manifolds 
$\rM _0 , \rM _1 , \cdots , \rM _r \subset \rM $;
\item
For all $k = 0, 1, \cdots r$, 
$\bar \rM _k := \rM _ k \bigcup \rM _{k + 1} \bigcup \cdots \bigcup \rM _r $ are closed, 
immersed sub-manifolds of $\rM$;
\item
$\cG _{0 } := \bs ^{-1} (\rM _0 ) = \bt ^{-1} (\rM _0 ) \cong \rM _0 \times \rM _0$, the pair groupoid, and 
$\cG _{k } := \bs ^{-1} (\rM _k ) = \bt ^{-1} (\rM _k )
\cong \rG _k \times ( \rM _k \times \rM _k ) $ for some Lie groups $\rG _k $;
\item
For each $k$, there exists (unique) sub-bundles $\bar \rA _k \subset \rA |_{\bar \rM _k}$
such that $\bar \rA |_{\rM _k } = \ker (\nu |_{\rM _k }).$  
\end{enumerate}
\end{dfn}
For simplicity, we shall also assume that $\rG _k $ and $\rM _k $ are connected, hence all $\bs$-fibers are connected.

On any boundary groupoids we have the following fundamental estimates \cite{So;FullCal}.
Fix any Riemannian metric $\bar g $ on $\rM $.
For each $k \geq 1 $, let $d ( \cdot , \bar \rM _{k }) $ be the distance function defined by $ g $.
For each $k \geq 0$, fix a function $\rho _k \in C^\infty (\rM)$ such that 
$\rho _k > 0 $ on $\rM \setminus \bar \rM _k $ 
and $\rho _k = d ( \cdot , \bar \rM _k) $ on some open set containing $ \bar M _k $.
\begin{lem} 
\label{LocalDegen}
For each $k$, there exists a constant $\omega _k$ 
such that for any $ x $ lying in some open neighborhood of $\bar \rM _k$, $ X \in A _x ,$
\begin{equation}
\label{LocalDegEq}
| d \rho _k \circ \nu (X) | \leq \omega _k \rho _k (x) | X |_{g _\rA} .
\end{equation}
\end{lem}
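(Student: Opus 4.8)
The plan is to read the left-hand side of \eqref{LocalDegEq} as the norm of a smooth section of $\rA^*$ which vanishes identically along $\bar\rM_k$, and then to bound any such section by $\rho_k$ through a first-order Taylor estimate along the minimizing geodesics issuing from $\bar\rM_k$.

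First I would fix, as the definition of $\rho_k$ allows, an open neighbourhood $U$ of $\bar\rM_k$ on which $\rho_k = d(\cdot,\bar\rM_k)$, and shrink it so that $U$ is a tubular neighbourhood of $\bar\rM_k$ lying within its focal radius; then $\rho_k$ is smooth on $U$, and each $x\in U$ is joined to its unique nearest point $x_0\in\bar\rM_k$ by a unique unit-speed minimizing geodesic of length $\rho_k(x)$ that remains in $U$. Put $\sigma:=\nu^*(d\rho_k)\in\Gamma(\rA^*|_U)$, so that $\langle\sigma(x),X\rangle=d\rho_k(\nu(X))$ for $X\in\rA_x$; it is smooth since $\nu$ and $\rho_k|_U$ are. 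I claim $\sigma$ vanishes on $\bar\rM_k$. Indeed, if $x\in\rM_j$ with $j\geq k$ then $\nu(\rA_x)=T_x\rM_j$, the anchor image of an orbit being its tangent space, while $\rho_k$ vanishes on $\rM_j\subseteq\bar\rM_k$, so $d\rho_k$ annihilates $T_x\rM_j$ and hence $\sigma(x)=0$; and every point of $\bar\rM_k=\rM_k\cup\cdots\cup\rM_r$ is of this form.

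The remaining estimate is routine. By the Cauchy--Schwarz inequality for $g_\rA$ it is enough to show $|\sigma(x)|_{g_\rA^*}\leq\omega_k\,\rho_k(x)$ for $x$ in a (possibly smaller) neighbourhood of $\bar\rM_k$. I would equip $\rA^*$ with any metric connection $\nabla$ for the fibre metric dual to $g_\rA$, so that parallel transport is a $g_\rA^*$-isometry. Given $x$ near $\bar\rM_k$, with nearest point $x_0$ and minimizing geodesic $\gamma\colon[0,\rho_k(x)]\to\rM$, transporting $\sigma$ along $\gamma$ back to $\rA^*_{x_0}$ produces a curve in the fixed space $\rA^*_{x_0}$ that starts at $\sigma(x_0)=0$ and has derivative the transport of $(\nabla_{\dot\gamma}\sigma)(\gamma(t))$; integrating and using $|\dot\gamma|=1$ gives $|\sigma(x)|_{g_\rA^*}\leq\rho_k(x)\cdot\sup_{\bar U}\|\nabla\sigma\|$, the supremum being finite because $\sigma$ is smooth on a compact tubular neighbourhood $\bar U$ of $\bar\rM_k$. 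Taking $\omega_k$ to be this supremum finishes the argument. The only step that really uses the boundary-groupoid hypotheses, rather than soft Riemannian geometry, is the vanishing of $\sigma$ along $\bar\rM_k$ in the second paragraph --- equivalently, that the anchor stays tangent to the strata and that each $\bar\rM_k$ coincides near itself with the zero set of $\rho_k$; granting that, the bound is just Hadamard's lemma in geometric dress, and I expect no serious obstacle.
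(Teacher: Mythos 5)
The paper does not actually prove this lemma; it is imported wholesale from \cite{So;FullCal}, so there is no internal proof to compare against. Your strategy --- read the left-hand side as the pairing of $X$ with the section $\sigma:=\nu^*(d\rho_k)$ of $\rA^*$, show $\sigma$ vanishes along $\bar\rM_k$ because the anchor is tangent to the strata while $\rho_k$ vanishes there, and then get the linear bound in $\rho_k$ by a Hadamard-type integration along normal geodesics --- is the right one, and the structural input you isolate (invariance of the strata under the anchor) is exactly what the lemma rests on.

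There is, however, one step that fails as written: the smoothness of $\rho_k$, hence of $\sigma$, \emph{at} $\bar\rM_k$. Here $\rM_1$ has codimension $q$ in $\rM$, and the distance function to a submanifold of codimension $\geq 2$ is not differentiable on the submanifold itself, no matter how small the tube; moreover its Hessian blows up like $1/\rho_k$ in the spherical normal directions. So $\sigma(x_0)$ is not literally defined for $x_0\in\bar\rM_k$, and --- more seriously --- the quantity $\sup_{\bar U}\|\nabla\sigma\|$ that you take as $\omega_k$ is \emph{not} finite merely because $\sigma$ is smooth off $\bar\rM_k$: the term $\operatorname{Hess}\rho_k(\cdot,\nu(X))$ inside $\nabla\sigma$ is only bounded once one already knows the normal component of $\nu(X)$ is $O(\rho_k)$, which is the content of the lemma. (The paper's own stipulation that $\rho_k\in C^\infty(\rM)$ \emph{and} $\rho_k=d(\cdot,\bar\rM_k)$ on a neighbourhood of $\bar\rM_k$ suffers from the same inconsistency.) The repair is routine but must be made: either (a) take $\rho_k$ to be any smooth function vanishing precisely on $\bar\rM_k$ and comparable to the distance, run your argument verbatim, and convert the geodesic length $d(x,\bar\rM_k)$ into $\rho_k(x)$ by comparability at the end; or (b) keep the distance function but pass to Fermi coordinates $(y,z)$, note that $d\rho_k$ annihilates the $\partial_y$-directions and has unit length so that $|d\rho_k(\nu(X))|\leq C\,|\nu^\perp(X)|$, and apply Hadamard's lemma to the genuinely smooth bundle map $\nu^\perp$ (the normal component of the anchor), which vanishes on $\{z=0\}$ by invariance; or (c) observe that your integration only ever uses the radial derivative $\nabla_{\dot\gamma}\sigma$, and $\operatorname{Hess}\rho_k(\dot\gamma,\cdot)=0$ along normal geodesics, so only the bounded term $d\rho_k((\nabla_{\dot\gamma}\nu)(X))$ survives --- but this has to be said explicitly, since the blanket bound on $\|\nabla\sigma\|$ you invoke is unavailable.
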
 

\begin{lem} 
\label{DEst}
For each $k$, let $\omega _k$ be defined in the previous Lemma \ref{LocalDegen}. 
Suppose further that $ | d \rho _k \circ \nu (X) | \leq \omega _k \rho _k (x) | X |_{g _\rA} $ for any $X \in \rA $.
Then for any $x \in \rM , a, b \in \cG _x $, 
\begin{equation}
\omega _k d (a, b ) \geq \left| \log \left( \frac{\rho _k (\bt (b))}{\rho _k (\bt (a))} \right) \right| .
\end{equation}
\end{lem}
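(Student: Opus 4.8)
The statement to prove is Lemma~\ref{DEst}: assuming the pointwise estimate $|d\rho_k\circ\nu(X)|\le\omega_k\rho_k(x)|X|_{g_\rA}$ holds for all $X\in\rA$, one wants $\omega_k d(a,b)\ge|\log(\rho_k(\bt(b))/\rho_k(\bt(a)))|$ for $a,b$ in a common $\bs$-fiber $\cG_x$. The natural approach is to integrate the infinitesimal estimate along a path. Fix a minimizing geodesic (or, since we only need an inequality, any path that is $\varepsilon$-close to minimizing) $\gamma:[0,1]\to\cG_x$ from $a$ to $b$ inside the $\bs$-fiber, parametrized proportionally to arc length, so $|\dot\gamma(t)|_{g_\rA}=L$ where $L=\mathrm{length}(\gamma)$ is close to $d(a,b)$. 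The composite $t\mapsto\rho_k(\bt(\gamma(t)))$ is a smooth positive function on $[0,1]$ (positivity because $\bt(\gamma(t))$ stays off $\bar\rM_k$ — this needs a word, see below), and I would compute its logarithmic derivative.

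**Key steps.** First, observe that $\dot\gamma(t)\in T^\bot\cG$, and under the identification of right-invariant vector fields with sections of $\rA$, the derivative $d\bt(\dot\gamma(t))$ equals $\nu(X(t))$ for the corresponding $X(t)\in\rA_{\bt(\gamma(t))}$ with $|X(t)|_{g_\rA}=|\dot\gamma(t)|_{g_\rA}$ (right invariance of the metric is exactly what makes these norms agree). Second, by the chain rule,
\begin{equation}
\frac{d}{dt}\log\rho_k(\bt(\gamma(t)))=\frac{1}{\rho_k(\bt(\gamma(t)))}\,d\rho_k\bigl(d\bt(\dot\gamma(t))\bigr)=\frac{d\rho_k\circ\nu(X(t))}{\rho_k(\bt(\gamma(t)))}.
\end{equation}
Third, apply the hypothesis to bound the absolute value of the right-hand side by $\omega_k|X(t)|_{g_\rA}=\omega_k|\dot\gamma(t)|_{g_\rA}$. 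Fourth, integrate over $[0,1]$:
\begin{equation}
\left|\log\frac{\rho_k(\bt(b))}{\rho_k(\bt(a))}\right|=\left|\int_0^1\frac{d}{dt}\log\rho_k(\bt(\gamma(t)))\,dt\right|\le\omega_k\int_0^1|\dot\gamma(t)|_{g_\rA}\,dt=\omega_k L.
\end{equation}
Finally, take the infimum over admissible paths $\gamma$ so that $L\to d(a,b)$, yielding the claim.

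**The main obstacle.** The one genuinely delicate point is that the hypothesis of Lemma~\ref{DEst} is a global-on-$\rM$ statement ($X\in\rA$ arbitrary), whereas Lemma~\ref{LocalDegen} only gave the estimate near $\bar\rM_k$; so I must make sure the computation above is carried out with the \emph{global} hypothesis, which is why the lemma is phrased with that extra assumption. More substantively, I need $\rho_k(\bt(\gamma(t)))>0$ along the entire path — i.e.\ the geodesic in $\cG_x$ cannot reach $\bar\rM_k$. If $x\notin\bar\rM_k$ then $\bt(\cG_x)$ stays in a single orbit disjoint from $\bar\rM_k$ (the strata are $\nu$-invariant), so positivity is automatic on that connected orbit and no issue arises; if $x\in\bar\rM_k$ both sides are handled separately since then $\rho_k(\bt(a))=\rho_k(\bt(b))=0$ and the inequality degenerates — though one should note $\log$ of a ratio $0/0$ needs interpreting, so most likely the intended reading restricts to $x\notin\bar\rM_k$ and the degenerate case is excluded or treated by a limiting argument. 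The remaining steps are routine once one is careful that "geodesic in $\cG_x$" means with respect to the right-invariant metric induced by $g_\rA$, so that arc length in $\cG_x$ matches the $g_\rA$-norm of the corresponding algebroid element fiberwise.
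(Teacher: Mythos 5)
Your argument is correct, and it is the standard one: the paper itself states Lemma~\ref{DEst} without proof, recalling it from \cite{So;FullCal}, and the proof there is exactly this integration of the logarithmic derivative of $\rho_k\circ\bt$ along (nearly) minimizing paths in the $\bs$-fiber, using $d\bt\circ d\br_a=d\bt$ and $d\bt|_{\rA}=\nu$ together with right-invariance of the metric to identify $|X(t)|_{g_\rA}$ with $|\dot\gamma(t)|$. Your side remarks are also the right ones: the infimum over paths is needed since minimizing geodesics need not exist in the noncompact fibers, positivity of $\rho_k\circ\bt\circ\gamma$ follows from invariance of the strata (the orbit of $x$ stays in a single stratum), and the case $x\in\bar\rM_k$ is vacuous/degenerate.
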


In this notes, we consider the simple case 
$\cG = (\rM _0 \times \rM _0 ) \bigsqcup (\rM _1 \times \rM _1 \times \bbR ^q ).$
In the following we write $\omega := \omega _1 , \rho := \rho _1 $, 
and we let $\dim \rM _1 = p , \dim \rM _0 = n = p + q$.

In the same vein, we recall the definitions of uniformly degenerate and non-degenerate in \cite{So;FullCal}.
\begin{dfn}
The groupoid $\cG$ is said to be non-degenerate if there exist constants 
$\omega'_1 , \omega ' _2 , \cdots , \omega ' _r > 0 $ such that 
$$ |\nu (X) | \geq \omega ' _k \rho _k (x) | X |_{g _\rA} ,$$
for any $ x \in \bar \rM _{k - 1}, X \in A _x $, and $ X \bot \bar \rA _k ;$
The groupoid $\cG$ is said to be uniformly degenerate if there exist constants 
$\omega _1 , \omega_2, \cdots , \omega _r , \omega '_1  , \cdots \omega '_r > 0 $
and exponents $\lambda _1 , \cdots \lambda _r , \\ \lambda' _1 , \cdots , \lambda ' _r \geq 2$ such that 
$$ | d \rho _k \circ \nu (X) | \leq \omega _k (\rho _k (x))^{\lambda _k} | X |_{g _\rA} 
\text { and } |\nu (X) | \geq \omega ' _k (\rho _k (x))^{\lambda ' _k } | X |_{g _\rA} ,$$
for any $ x \in \bar \rM _{k-1}, X \in A _x $, and $ X \bot \bar \rA _k .$
\end{dfn}

\subsection{The exponential map}
Let $\cG \rightrightarrows \rM$ be a Lie groupoid with $\rM $ compact.
Denote the Lie algebroid of $\cG$ by $\rA \to \rM $ and anchor map by $\nu$.
Fix a Riemannian metric on $\rA$. 

Given any smooth section $X \in \Gamma ^\infty (\rA)$,
denote by $X ^\br$ the right invariant vector field on $\cG $ with $\bs ^* X ^\br = 0 $ and $X ^\br |_\rM = X $.
Since $\rM $ is compact, 
it is standard that $X　^\br$ is a complete vector field on $\cG$,
hence one has a well defined map 
$$ \exp X : \rM \to \cG ,$$
given by the flow of $X ^\br $ form each $x \in \rM \subset \cG $.
It is a well known fact that $\bt \circ \exp X  $ equals the flow of 
$ \nu (X) $ on $\rM $ and hence is a $\exp X $ is an admissible section.
Define
$$ E _X := d \bt \circ d ( \exp X |_ \rA ) : \rA \to \rA .$$

We list some basic properties of the exponential map \cite{Nistor;IntAlg'oid}, \cite{Mackenzie;Book}:
\begin{enumerate}
\item
For any $X , Y \in C^\infty (\rA ), \exp X \exp Y = \exp Y \exp E _X $;
\item
For any $x \in \rM , ((\exp X )(x))^{-1} = \exp (- X) (E ^\nu _X (x)) $,
where $E ^\nu _X : \rM \to \rM $ is the flow of $\nu (X)$.
\end{enumerate}

\begin{nota}
For any collection of sections $Z _I = (Z_1 , \cdots Z _{|I|} ) \in \Gamma ^\infty (\rA )$, denote 
$$ \exp Z _I := \exp Z _{|I|} \exp Z _{|I| - 1} \cdots \exp Z _2 \exp Z _1 .$$
For any $\mu = (\mu _1 , \cdots , \mu _{|I|}) \in \bbR ^{|I|} $, denote
$$ \exp ( \mu \cdot Z _I )
:= \exp \mu _{|I|} Z _{|I|} \exp \mu _{|I| - 1} Z _{|I| - 1} \cdots \exp \mu _2 Z _2 \exp \mu _1 Z _1 .$$
\end{nota}

We specialize the construction of exponential coordinates in \cite{Nistor;IntAlg'oid} to our case.
Fix an orthonormal bases $\{ Y _1 , \cdots , Y _q \} $ of $\bbR ^q $.
We regard it as a basis of 
$ T \rM _1 \times \bbR ^q $ and extend to an orthonormal set of sections on some neighborhood of $\rM _1 $,
which we still denote the extension by $\{ Y _1 , \cdots , Y _q \} $.
Let $\tilde V _\alpha \subset \rM \subset \cG$ be a coordinate patch on $\rM$, 
$\tilde V _\alpha \bigcap \rM _1 \neq \emptyset$.
We may assume that $\rA |_{\tilde V _\alpha } $ is trivial.
Fix a orthonormal basis
$$\{ Y _1 , \cdots Y _p , X ^{(\alpha)} _1 , \\ \cdots , X ^{(\alpha )} _q \} 
\subset \Gamma ^\infty (\rA |_{\tilde V _\alpha }),$$
and extend $X ^{(\alpha )}$ to smooth compactly supported sections. 

\begin{nota}
For any $\tau \in \bbR ^q $, denote by 
$$\| \tau \| := | \tau _1 | + \cdots + | \tau _q |, $$ 
the 1-norm of $\tau $.
Given $r , M > 0$, define 
$$ T (r , M) := \{ (x, \tau ) \in \rM \times \bbR ^q : \rho (x) < r e ^{- M \| \tau \| } \} .$$
\end{nota}
Clearly $T (r' , M') \subseteq T (r , M) $ whenever $r' \geq r, M ' \geq M $.

\begin{dfn}
Given $\{ Y _1 , \cdots Y _p , X ^{(\alpha)} _1 , \cdots , X ^{(\alpha )} _q \}$ as above,
and a arbitrary collection 
$Z _I = \{ Z _1 , \cdots , Z _{|I|} \} \subset \Span _\bbR \{ X ^{(\alpha)} _1 , \cdots , X ^{(\alpha )} _q \}, $
(over all $\alpha $) define the function $\bx ^{(\alpha )} _{Z _I} : V _\alpha \times \bbR ^{p + q} \to \cG $,
$$ \bx ^{(\alpha )} _{Z _I} (x , \mu , \tau ) 
:= \exp \tau \cdot Y \exp \mu \cdot X ^{(\alpha )} \exp {Z _I} (x) .$$
\end{dfn}

\begin{lem} 
\label{DiffeoLem}
There exists $\varepsilon , r _0 , M > 0$ such that $\bx ^{(\alpha )}$ 
is a local diffeomorphism from $(- \varepsilon , \varepsilon ) ^p \times T ( r _0 , M )$ onto its image.
\end{lem}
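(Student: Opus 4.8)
The plan is to show that $\bx^{(\alpha)}_{Z_I}$ is a local diffeomorphism by verifying that its differential is invertible at every point of a suitable domain, and then choosing $\varepsilon, r_0, M$ to force the map onto a region where this holds uniformly. First I would compute the differential at points of $\rM_1$ itself (i.e.\ where $\rho(x)=0$ and $\tau = \mu = 0$). There, by the standard properties of the exponential map listed above and the fact that $\{Y_1,\dots,Y_p, X^{(\alpha)}_1,\dots,X^{(\alpha)}_q\}$ is an orthonormal frame of $\rA|_{\tilde V_\alpha}$, the derivative $d\bx^{(\alpha)}_{Z_I}$ at $(x,0,0)$ with $x \in \rM_1$ sends the $\partial_\tau$ directions to $\nu(Y)$-type directions along $\rM_0$ transverse to $\rM_1$, the $\partial_\mu$ directions to the $\bt$-fiber of $\cG_1 \cong \rG_1 \times (\rM_1\times\rM_1)$ lifted to $T^\bot\cG$, and the $\partial_x$ directions into $T\rM_1$; one checks these span $T_{\bx(x,0,0)}\cG$. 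Here the key point is that $\exp Z_I$ is a diffeomorphism near $\rM_1$ (since the $Z_i$ lie in the span of the $X^{(\alpha)}$, which are $\nu$-non-degenerate transverse to $\bar\rA_1$) and precomposition by it does not change invertibility.

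Next I would pass to a neighborhood: invertibility of the differential is an open condition, so for each $x_0 \in \rM_1$ there is a neighborhood in $\rM \times \bbR^{p+q}$ on which $d\bx^{(\alpha)}_{Z_I}$ remains invertible. The issue is that this neighborhood could shrink to nothing as $x_0$ moves toward the ``boundary at infinity'' of the $\bs$-fibers — but this is exactly controlled by the exponential-coordinate construction of Nistor: the whole point of the tube $T(r_0,M)$ is that the rescaling $\rho(x) < r_0 e^{-M\|\tau\|}$ keeps the relevant points in a region where the flows of the $Y_i^\br$ and $X_i^{(\alpha),\br}$ stay within the coordinate patch and the degeneracy estimate \eqref{LocalDegEq} of Lemma \ref{LocalDegen} keeps the anchor-image bounded. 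So I would invoke compactness of $\rM_1$ (or its closure $\bar\rM_1$) together with the estimates of Lemma \ref{LocalDegen} and Lemma \ref{DEst} to produce uniform $\varepsilon, r_0, M$: choosing $M$ large enough relative to $\omega$ guarantees that $\exp\tau\cdot Y$ does not move $\rho$ by more than a controlled factor, keeping everything inside the region where invertibility was established near $\rM_1$. Finally, a local-diffeomorphism-onto-its-image statement (as opposed to a global one) follows once the differential is everywhere invertible on a connected domain shaped like $(-\varepsilon,\varepsilon)^p \times T(r_0,M)$, possibly after shrinking.

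I expect the main obstacle to be the uniformity across the non-compact $\bs$-fibers: proving that a single choice of $M$ suffices so that $\bx^{(\alpha)}_{Z_I}$ restricted to $T(r_0,M)$ always lands where the differential is invertible. This is where the asymptotically Euclidean hypothesis and Lemma \ref{DEst} do the real work — the logarithmic bound $\omega\, d(a,b) \geq |\log(\rho(\bt(b))/\rho(\bt(a)))|$ translates the $\bbR^q$-translation length $\|\tau\|$ into a multiplicative distortion of $\rho$, and the exponential cutoff $e^{-M\|\tau\|}$ precisely absorbs it. The verification of the differential at $\rM_1$ and the openness/compactness steps are routine; I would phrase the uniformity step carefully and keep the rest brief.
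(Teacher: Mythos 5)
Your proposal follows essentially the same route as the paper: reduce via the inverse function theorem to invertibility of the differential, check that at points over $\rM _1$ the coordinate vector fields $\partial _x , \partial _{\mu }, \partial _{\tau }$ give a frame, and then use the tube condition $\rho (x) < r _0 e ^{- M \| \tau \|}$ to keep the perturbation uniformly small. The only difference is that the paper implements your uniformity step concretely, expressing the coefficient matrix of $\partial _{\tau }, \partial _{\mu }$ in the frame $\{ Y , X ^{(\alpha )} \}$ as $I + w'$ with $|w'| \leq \rho (x) \, C _1 e ^{M _1 \| \tau \|} < C _1 r$ on $T (r , M _1 )$, rather than appealing to an openness/compactness argument.
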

\begin{proof}
By the inverse function theorem it suffices to prove that the differential of $\bx $ is invertible.
It is straightforward to compute for any $ a = \bx (x , \mu , \tau) $,
the `coordinate vector fields' are given by
\begin{align}
\label{CoordVF}
\partial _{\tau _i} (a) &= 
d \br _a ( E _{\tau _q Y _q } \cdots E _{\tau _{i+1} Y _{i+1}}
Y _i (E ^\nu _{\tau _i Y _i} \cdots E ^\nu _{\tau _1 Y _1 } E ^\nu _{\mu \cdot X ^{(\alpha )}} E ^\nu _{Z _I} (x))), 
\\ \nonumber 
\partial _{\mu _{i'}} (a) &= 
d \br _a (E _{\tau \cdot Y } E _{\mu _p X ^{(\alpha )} _p } \cdots E _{\mu _{i'+1} X ^{(\alpha )} _{i'+1}}
Y _{i'} (E ^\nu _{\mu _{i'} X ^{(\alpha )} _{i'}} \cdots E ^\nu _{\mu _1 X ^{(\alpha )} _1 } E ^\nu _{Z _I} (x))).
\end{align}
In view that $d \bs (\partial _x ) = \partial _x $, it suffices to show that 
$ \partial _{\tau _i} , \partial _{\mu _{i'}} $ is a basis of $T ^\bot \cG $.
Following Equation (\ref{CoordVF}), one writes
\begin{align*}
E _{\tau _q Y _q }\cdots E _{\tau _{i +1} Y _{i +1}} Y _i 
(E ^\nu _{\tau _i Y _i} & \cdots E ^\nu _{\tau _1 Y _1 } 
E ^\nu _{\mu \cdot X ^{(\alpha )}} E ^\nu _{Z _I ^{\alpha }} (x)) \\
:=& \Big( \sum _{j =1} ^q w _{j i} Y _j + \sum _{j' = 1} ^p w _{j' i} X ^{(\alpha )} _{j'} \Big)
(E ^\nu _{\tau \cdot Y } E ^\nu _{ \mu \cdot X ^{(\alpha )}} E ^\nu _{Z _I ^{\alpha }}) (x)) \\
E _{\tau _\cdot Y} E _{\mu _p X ^{(\alpha )} _p} \cdots E _{\mu _{i' +1} X ^{(\alpha )} _{i' +1 }}& X ^{(\alpha )} _{i'}
(E ^\nu _{\mu _{i'} \cdot X ^{(\alpha )}_{i'}} \cdots E^\nu _{\mu _1 X ^{(\alpha )} _1} E ^\nu _{Z _I ^{\alpha }}(x)) \\
:=& \Big( \sum _{j =1} ^q w _{j i'} Y _j + \sum _{j' = 1} ^p w _{j' i'} X ^{(\alpha )} _{j'} \Big)
(E ^\nu _{\tau \cdot Y } E ^\nu _{ \mu \cdot X ^{(\alpha )}} E ^\nu _{Z _I ^{\alpha }}) (x)).
\end{align*}
Observe that at $x \in \rM _1 $, 
$ w _{j i} = 1 $ if $i = j$, $ w _{j' i'} = 1 $, and zero otherwise.
Moreover, it is clear that the $x$-derivatives of $w$ are bounded by $C _1 e ^{M _1 \| \tau \| }$.
It follows that for any $(x, \tau ) \in T (r , M _1 )$ with $r > 0$ sufficiently small,
we can write
\begin{equation}
\label{ExpPtb}
\left(
\begin{array}{cc}
w _{j i} & w _{j' i} \\
w _{j i'} & w _{j' i'}
\end{array}
\right)
= I + 
\left(
\begin{array}{cc}
w' _{j i} & w' _{j' i} \\
w' _{j i'} & w' _{j' i'}
\end{array} 
\right) ,
\end{equation}
with the last term on the right hand side uniformly small, hence 
$\left(
\begin{array}{cc}
w _{j i} & w _{j' i} \\
w _{j i'} & w _{j' i'}
\end{array} 
\right)$
is invertible, and the assertion follows.
\end{proof}

In the rest of this notes we shall make the following additional assumption:
\begin{hypo}
\label{Hypo}
For some $T ( r _0 , M )$,
the map $\bx ^{(\alpha )} $ are diffeomorphisms.
In other words, $\bx ^{(\alpha )} $ defines a local coordinate patch.
\end{hypo}

\begin{nota}
We denote the image of $\bx ^{(\alpha )} $ by $U _\alpha $.
For fixed $\mu , \tau $, denote $\bx ^{(\alpha )} (\mu , \tau ) $ to be the (local) section 
$\exp \tau \cdot Y \exp \mu \cdot X ^{(\alpha )} \exp Z _{I _\alpha } $.
\end{nota}

\begin{rem}
In general, if $\rG $ is not Abelian, then $| E _{\tau \cdot Y } | $ is not uniformly bounded.
If $\rG $ is nilpotent, then $| E _{\tau \cdot Y } | $ is bounded by estimate of the form $e ^{(\log |\tau |) ^2 } $.
See \cite[Appendix B]{So;FullCal}.
\end{rem}

\subsection{Groupoid multiplication}
The main advantage of the exponential coordinates we constructed, 
is that the groupoid multiplication can be described rather explicitly.

To begin with, consider $\exp \tau _i Y _i \bx ^{(\alpha )} (x , \mu , \tau )$.
Recall that by definition, 
$\exp t Y _i \\ \bx ^{(\alpha )} (x , \mu , \tau )$ is just the integral curve of the vector field 
$Y _i ^\br $ from $\bx ^{(\alpha )} (x , \mu , \tau )$, evaluated at $\tau _i $.
So we write
$$ Y _i ^\br (\bx ^{(\alpha )} (x, \mu , \tau ))
:= \sum _{j = 1} ^q v _{j i} (x, \mu , \tau ) \partial _{\tau _j } (x, \mu , \tau )
+ \sum _{j' = 1} ^p v _{j' i} (x, \mu , \tau ) \partial _{\mu _{j'}} (x, \mu , \tau ). $$
In other words,
\begin{align*}
Y _i (E ^\nu _{\tau ^{(\alpha )} \cdot Y } E ^\nu _{\mu ^{(\alpha )} \cdot X ^{(\alpha )}} E ^\nu _{X _I} (x))
=& \sum _{j = 1} ^q v _{j i} E _{\tau _q Y _q} \cdots E _{\tau _{j+1} Y _{j+1}} 
Y _j (E ^\nu _{\tau _j Y _j } \cdots E ^\nu _{\tau _1 Y _1} E ^\nu _{\mu \cdot X ^{(\alpha )}}(x)) \\ 
+ \sum _{j' = 1}^p v _{j' i} 
E _{\tau \cdot Y} & E _{\mu _p X ^{(\alpha )}_p} \cdots E _{\mu _{j'+1} X ^{(\alpha )}_{j'+1}} 
X ^{(\alpha )} _{j'} (E ^\nu _{\mu _{j'} X ^{(\alpha )}_{j'}} \cdots E ^\nu _{\mu ^{(\alpha )} _1 X ^{(\alpha )} _1}(x)).
\end{align*}
Note that 
$
\left(
\begin{array}{cc}
v _{j i} & v _{j' i} \\
v _{j i'} & v _{j' i'}
\end{array}
\right)
=
\left(
\begin{array}{cc}
w _{j i} & w _{j' i} \\
w _{j i'} & w _{j' i'}
\end{array}
\right) ^{-1} ,
$
with the right hand side defined in the proof of Lemma \ref{DiffeoLem}.

It is now straightforward to write
$$ \exp t Y _i \bx ^{(\alpha )} (x , \mu , \tau ) 
:= \bx ^{(\alpha )} (x , \varphi (x, \mu , \tau , t) , \psi (x, \mu , \tau , t)),$$
where 
$(\varphi (x, \mu , \tau , t), \psi (x, \mu , \tau , t))$ 
is just the solution of the system of ODE 
\begin{align}
\label{MultiODE}
\frac{d \varphi _{j'} (x, \mu , \tau , t)}{d t} 
=& v _{j' i} (x, \varphi (x, \mu , \tau , t), \psi (x, \mu , \tau , t)) \\ \nonumber
\frac{d \psi _j (x, \mu , \tau , t)}{d t} 
=& v _{j i} (x, \varphi (x, \mu , \tau , t), \psi (x, \mu , \tau , t)) 
\end{align}
with the obvious initial condition $\varphi (x, \mu , \tau , 0) = \mu , \psi (x, \mu , \tau , 0) = \tau $.

As in Equation (\ref{ExpPtb}), 
observe that when $x \in \rM _1 , \partial _{\tau _i } = Y _i ^\br$. 
In other words $v _{j i} = \delta _{j i}$, and $v _{j' i } = 0 $.
Clearly, the derivatives 
$ \partial _x v _{j i} (x , \mu , \tau ), \partial _x v _{j' i} (x ,\mu , \tau )$ 
are bounded by $C _2 e ^{ M _2 \| \tau \| }$.
It follows that we have the a-prior estimate 
$$| \psi _j (x , \mu , \tau , t ) - \tau _i | \leq \rho (x) C _2 e ^{M _2 \| \tau \| } t \leq C _2 r t,$$
whenever $x \in T (r, K), K \geq M _2 $.
In particular, if $(x, \tau ) \in T (r e ^{- q C _2 r t _0 } , K ) $,
then $\psi (x, \mu , \tau , t ) $ is well defined for $- t _0 \leq t \leq t _0 $ 
since $(x , \psi (x, \mu , \tau , t)) \in T (r , K ) $.

In turn, the a-prior estimate implies
\begin{align*}
\big| v _{j i} (x, \varphi (x, \mu , \tau , t), \psi (x, \mu , \tau , t)) - \delta _{j i} \big| 
\leq & \rho (x) C _2 e ^{ M _2 (\| \tau \| + q C _2 r t) } \\
\big| v _{j' i} (x, \varphi (x, \mu , \tau , t), \psi (x, \mu , \tau , t)) \big| 
\leq & \rho (x) C _2 e ^{ M _2 (\| \tau \| + q C _2 r t) }.
\end{align*}
Integrating with respect to $t$, we get
\begin{align} 
\label{CoordEst}
| \psi _j (x, \mu , \tau , t)  - \delta _{j i} t - \tau _j | 
\leq & \rho (x) (q r ) ^{-1} e ^{ M _2 (\| \tau \| + q C _2 r t) }  \\ \nonumber
| \varphi _{j'} (x, \mu , \tau , t) - \mu _{j'}| 
\leq & \rho (x) (q r ) ^{-1} e ^{ M _2 (\| \tau \| + q C _2 r t) } .
\end{align}

The case for $ \exp t Z \bx ^{( \alpha )} (x , \mu, \tau ) , t \in [0, 1]$, 
where $Z \in \Span \{ X ^{(\alpha )} \} $ (over all $\alpha $) is similar.
Again one writes 
$$ \exp t Z \bx ^{(\alpha )} (x , \mu , \tau ) 
= \bx ^{(\alpha )} (x , \tilde \varphi (x, \mu , \tau , t) , \tilde \psi (x, \mu , \tau , t)),$$
and one has
\begin{align}
\frac{d \tilde \varphi _{j'} (x, \mu , \tau , t)}{d t} 
=& v _{j' i'} (x, \tilde \varphi (x, \mu , \tau , t), \tilde \psi (x, \mu , \tau , t)) \\ \nonumber
\frac{d \tilde \psi _j (x, \mu , \tau , t)}{d t} 
=& v _{j i'} (x, \tilde \varphi (x, \mu , \tau , t), \tilde \psi (x, \mu , \tau , t)).
\end{align}
Using the same arguments as above, 
we get $w _{j i'} (x, \tilde \varphi , \tilde \psi ) = 0 $ for any $x \in \rM _1 $ and 
\begin{equation}
| \tilde \psi _j (x, \mu , \tau , t) - \tau _j | \leq \rho (x) (q r ) ^{-1} e ^{ M _2 (\| \tau \| + q C _2 r t) }. 
\end{equation}

\begin{rem}
Observe that one can also write 
$$ \exp t Z \bx ^{( \alpha )} (x , \mu, \tau ) 
= \exp _{\tau \cdot Y } \exp _{E _{\tau \cdot Y } t Z }
\bx ^{(\alpha )} (x , \mu , 0 ) (x) .$$  
\end{rem}

The techniques leading to 
Equation (\ref{CoordEst}) can be further refined to give an estimate of the derivatives of the multiplication map.
First consider $d (\exp t Y _i ) \partial _x (\bx ^{(\alpha )} (x , \tau , \mu ))$.
Clearly, one has
$$ d (\exp t Y _i ) \partial _x (\bx ^{(\alpha )} (x , \tau , \mu )) 
= \Big( \partial _x + \sum _{j = 1 } ^q (\partial _x \psi _j ) \partial _{\tau _j} 
+ \sum _{j' = 1 } ^p (\partial _x \varphi _{j'} ) \partial _{\mu _j} \Big) 
(\bx ^{(\alpha )} (x , \varphi , \psi)).$$
To estimate $\partial _x \psi (x, \mu, \tau , t) $ and $ \partial _x \varphi (x , \mu , \tau , t)$,
differentiate Equation (\ref{MultiODE}) with respect to $x$ to get
\begin{align}
\frac{d}{d t} \partial _x \varphi _{j'} & (x, \mu , \tau , t ) \\ \nonumber
=& \Big(\frac {\partial v _{j' i}}{\partial x} 
+ \sum _{i' = k} ^p \frac{ \partial \varphi _{k'} }{\partial x } \frac{ \partial v _{j' i}}{\partial \mu _{k'}}
+ \sum _{i = k} ^q \frac{ \partial \psi _k }{\partial x } \frac{ \partial v _{j' i}}{\partial \tau _k} \Big)
(x, \varphi (x, \mu , \tau , t ), \psi (x, \mu , \tau , t )) \\ \nonumber
\frac{d}{d t} \partial _x \psi _j & (x, \mu , \tau , t ) \\ \nonumber
=& \Big(\frac {\partial v _{j i}}{\partial x} 
+ \sum _{k' = 1} ^p \frac{ \partial \varphi _{k'} }{\partial x } \frac{ \partial v _{j i}}{\partial \mu _{k'}}
+ \sum _{k = 1} ^q \frac{ \partial \psi _k }{\partial x } \frac{ \partial v _{j i}}{\partial \tau _k} \Big)
(x, \varphi (x, \mu , \tau , t ), \psi (x, \mu , \tau , t )).
\end{align}
Moreover, $\partial _x \varphi (x, \mu , \tau , 0 ) = \partial _x \psi (x, \mu , \tau , 0 ) = 0 $.

Recall that 
$ |\partial _x v _{j _1} (x, \varphi (x, \mu , \tau , t), \psi (x, \mu ,\tau , t))| 
\leq \rho (x) (q r ) ^{-1} e ^{ M _2 (\| \tau \| + q C _2 r |t|) },$
for some constants $C _2 , M _2 > 0$.
As for the terms in the summation, 
note that at $x \in \rM _1 $, $v _{j 1} $ are just constants, 
therefore $\partial _{\tau _i} v _{j 1}, \partial _{\mu _{i'}} v _{j 1}$ vanish.
It follows that $\partial _{\tau _i} v _{j 1}, \partial _{\mu _{i'}} v _{j 1}$ are bounded for 
$(x, \tau) $ lying in some $ T (r _0 , K )$.
In other words, we have a linear equation of the form
$$
\frac{d}{d t} 
\left(
\begin{array}{c}
\partial _x \psi \\
\partial _x \varphi
\end{array}
\right)
(x, \mu , \tau , t)  
\leq A 
\left(
\begin{array}{c}
\partial _x \psi \\
\partial _x \varphi
\end{array}
\right)
(x, \mu , \tau , t)
+ \rho (x) (q r ) ^{-1} e ^{ M _2 (\| \tau \| + q C _2 r t) }.
$$
for some constant matrix $A$.
Integrating, one obtains: 
\begin{lem}
\label{DiffEstLem}
One has 
$$(d \exp t Y _i ) \partial _x (\bx ^{(\alpha )} (x , \tau , \mu )) = \partial _x + X ,$$
where $X \in T ^ \bot \cG $ and satisfies an estimate the form
$$ |X| \leq C _3 e ^{M _3 (\| \tau \| + |t|)} ,$$
for some constants $C _3, M _3 $. 
\end{lem}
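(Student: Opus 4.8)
The plan is to run a Gr\"onwall-type estimate on the linear ODE system for $(\partial_x \psi, \partial_x \varphi)$ that has just been derived. First I would make precise the three ingredients already assembled in the preceding paragraph: (a) the inhomogeneous term $\rho(x)(qr)^{-1} e^{M_2(\|\tau\| + qC_2 r t)}$ is an honest bound for $\partial_x v_{ji}$ and $\partial_x v_{j'i}$ along the flow, valid once $(x,\tau) \in T(r_0, K)$ for suitable $r_0, K$; (b) the coefficients $\partial_{\tau_k} v_{ji}$, $\partial_{\mu_{k'}} v_{ji}$ (and likewise for the $\varphi$-equation) are \emph{uniformly bounded} on $T(r_0, K)$ — this is because they vanish on $\rM_1$, so they are $O(\rho(x))$, hence bounded — which furnishes the constant matrix $A$ with $\|A\| \le C$; (c) the initial condition is zero. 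I would then write the integral form
\begin{equation*}
\left( \begin{array}{c} \partial_x \psi \\ \partial_x \varphi \end{array} \right)(x,\mu,\tau,t)
= \int_0^t e^{(t-\sigma) A} \, \rho(x)(qr)^{-1} e^{M_2(\|\tau\| + qC_2 r \sigma)} \, d\sigma,
\end{equation*}
bound $\|e^{(t-\sigma)A}\| \le e^{C|t|}$, and carry out the $\sigma$-integration to obtain a bound of the shape $\rho(x) \cdot C' e^{M'(\|\tau\| + |t|)}$; absorbing $\rho(x) \le 1$ (or rather $\rho(x) < r_0$) gives $|X| \le C_3 e^{M_3(\|\tau\| + |t|)}$ as claimed, where $X := \sum_j (\partial_x \psi_j)\partial_{\tau_j} + \sum_{j'}(\partial_x\varphi_{j'})\partial_{\mu_{j'}}$.

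The remaining point is the structural identity $(d \exp t Y_i)\partial_x(\bx^{(\alpha)}(x,\mu,\tau)) = \partial_x + X$ with $X \in T^\bot\cG$. This is essentially the chain rule: since $\exp t Y_i \, \bx^{(\alpha)}(x,\mu,\tau) = \bx^{(\alpha)}(x, \varphi(x,\mu,\tau,t), \psi(x,\mu,\tau,t))$, differentiating in $x$ produces $\partial_x(\bx^{(\alpha)})$ plus the $\partial_{\tau_j}, \partial_{\mu_{j'}}$ terms weighted by $\partial_x\psi_j, \partial_x\varphi_{j'}$. That the coordinate vector fields $\partial_{\tau_j}, \partial_{\mu_{j'}}$ lie in $T^\bot\cG$ is exactly what was shown in the proof of Lemma \ref{DiffeoLem} (they span $T^\bot\cG$ there), and $d\bs$ applied to the whole expression returns $\partial_x$ because $\bs \circ \exp t Y_i = \bs$ composed appropriately — more precisely, $\bs(\exp t Y_i \cdot a) = \bs(a)$, so the $\bs$-component is unchanged. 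Hence the $X$-part is $\bs$-vertical, i.e. $X \in T^\bot\cG$. The norm $|X|$ is then controlled by $|\partial_x\psi| + |\partial_x\varphi|$ times the (uniformly bounded, on $T(r_0,K)$) lengths of $\partial_{\tau_j}, \partial_{\mu_{j'}}$, which is where step (b) above re-enters.

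I expect the main obstacle to be bookkeeping rather than a conceptual difficulty: one must verify that all the constants $C_2, M_2, K, r_0$ can be chosen uniformly in $\alpha$ and in $(\mu,\tau)$ over the relevant tube $T(r_0,M)$, and that the $t$-interval on which the flow stays inside a region where estimates (a)--(b) hold can be taken of fixed length (shrinking $r_0$ if necessary, exactly as in the derivation of Equation (\ref{CoordEst})). A secondary subtlety is that the lengths $|\partial_{\tau_j}|, |\partial_{\mu_{j'}}|$ of the coordinate fields are measured in the $\bs$-fiber metric and, by the Remark after Assumption \ref{Hypo}, $|E_{\tau\cdot Y}|$ need not be uniformly bounded when $\rG$ is non-Abelian; but in the present case $\cG = (\rM_0\times\rM_0)\sqcup(\rM_1\times\rM_1\times\bbR^q)$ the relevant group is $\bbR^q$, which is Abelian, so $E_{\tau\cdot Y} = \id$ and these lengths are genuinely bounded — this should be recorded explicitly. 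Granting these uniformities, the Gr\"onwall argument closes and yields Lemma \ref{DiffEstLem}.
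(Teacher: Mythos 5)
Your proposal is correct and follows essentially the same route as the paper: the paper also differentiates the ODE system (\ref{MultiODE}) in $x$, notes the zero initial condition, bounds the inhomogeneous term by $\rho(x)(qr)^{-1}e^{M_2(\|\tau\|+qC_2rt)}$ and the coefficient matrix by a constant (since $\partial_{\tau}v,\partial_{\mu}v$ vanish on $\rM_1$), and integrates (Gr\"onwall) to get the stated bound, with the decomposition $\partial_x+\sum_j(\partial_x\psi_j)\partial_{\tau_j}+\sum_{j'}(\partial_x\varphi_{j'})\partial_{\mu_{j'}}$ coming from the chain rule exactly as you describe. Your added care about the variation-of-constants formula, the uniformity of constants, and the boundedness of the coordinate-field lengths in the Abelian case $\rG=\bbR^q$ only makes the argument more explicit than the paper's.
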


The case for $\partial _{\tau _k } $ and $\partial _{\mu _{k'}}$ is much simpler.
Observe that both vector lies in $T ^\bot \cG $,
therefore
$$(d \exp t Y _i ) \partial _{\tau _k } (\bx ^{(\alpha )} (x , \tau , \mu )) 
= d \br _{ \exp _{t Y _i } \bx ^{(\alpha )} (x , \tau , \mu )}
(E _{t Y _i } (d \br ^{-1} _{\bx ^{(\alpha )} (x , \tau , \mu )} \partial _{\tau _k})). $$
Since the metric $g $ on $T ^\bot \cG $is right invariant, 
we have 
\begin{equation}
\label{DiffEstVert}
| (d \exp t Y _i ) \partial _{\tau _k } | \leq C _4 e ^{ M _4 t } |\partial _{\tau _k }|,
\end{equation}
for some constants $C _4, M _4 $,
and clearly similar estimate holds for $\partial _{\mu _{k'}}$.
Here, the important point to note is that the estimate does not depend on $(x, \mu, \tau )$.

\subsection{Change of coordinates}
\label{ChangeCoord}
In this section we turn to the change of coordinate formulas.
Observe that suppose one has
$\bx ^{(\beta )} (x, \mu , \tau ) 
= \bx ^{(\alpha )} (x, \varphi (x, \mu , \tau) , \psi (x, \mu , \tau )),$
then 
$$ \bx ^{(\beta )} (x, \mu , \tau ) = \exp \tau \cdot Y \exp \bx ^{(\beta )} (x, \mu , 0 )
= \exp \tau _\cdot Y \bx ^{(\alpha )} (x, \varphi (x, \mu , 0) , \psi (x, \mu , 0)) .$$
Suppose that $(\varphi (x, \mu , 0) - 2 \varepsilon , \varphi (x, \mu , 0) + 2 \varepsilon ) ^p 
\subset ( - \varepsilon _\alpha , \varepsilon _\alpha )^p $, where
$$\bx ^{(\alpha )} : ( - \varepsilon _\alpha , \varepsilon _\alpha )^p \times ( V _\alpha \bigcap T (r , K)) \to \cG .$$
Then, by possibly by restricting $T (r , K)$,
$\varphi (x, \mu , \tau) $ and $ \psi (x, \mu , \tau )$ can be computed using Equation (\ref{CoordEst}).
In particular, the second equation of (\ref{CoordEst}) implies 
$\varphi (x, \mu , \tau )$ still lies in the domain of $\bx ^{( \alpha )}$.
Moreover, repeated use of the first equation of (\ref{CoordEst}) now implies that 
\begin{align*}
\| \psi (x, \mu , \tau _1 , 0 , \cdots , 0 ) - \psi (x , \mu , 0, \cdots , 0 ) - \tau \| 
& \leq \rho (x) C _2 e ^{M _2 ( |\tau _1 | + \| \psi (x, \mu , 0 , \cdots , 0 ) \| )} \\
\| \psi (x, \mu , \tau _1 , \tau _2 , 0 , \cdots , 0 ) - \psi (x , \mu , \tau _1 , 0, \cdots , 0 ) - \tau \| 
& \leq \rho (x) C _2 e ^{M _2 ( |\tau _2 | + \| \psi (x, \mu , \tau _1 , 0 , \cdots , 0 ) \| )} \\
\leq \rho (x) C _2 & e ^{M _2 ( |\tau _2 | + | \tau _1 |
+ \rho (x) C _2 e ^{M _2 ( |\tau _1 | + \| \psi (x, \mu , 0 ) \| )}) } \\
& \vdots \\
\| \psi (x, \mu , \tau ) - \tau \| & \leq \rho (x) C _4 e ^{M _4 \| \tau \| }.
\end{align*}
Here, note the assumption $(x, \tau ) \in T (r, K) , K \geq C _2 $, 
implies all term on the right hand side are bounded.  
On the other hand, Lemma \ref{DiffEstLem} implies that one has estimations for the change of coordinate vector fields:
\begin{align*}
\|\partial _x \psi (x, \mu , \tau ) \| \leq & C _5 e ^{M _5 \| \tau \| }.
\end{align*}

Fix a collection of coordinate charts
$$\bx ^{(\alpha )} : ( - \varepsilon _\alpha , \varepsilon _\alpha )^p \times ( V _\alpha \bigcap T (r , C)) \to \cG ,$$
and $\varepsilon > 0 $
such that for all $x, \in V _\alpha , \mu ^{(\alpha )} \in ( - \varepsilon ^{(\alpha )} , \varepsilon ^{(\alpha )})$,
there exists $\beta $ (possibly same as $\alpha $) such that
$$ \bx ^{(\alpha )} (x, \mu ^{(\alpha )} , 0 ) = \bx ^{(\beta )} (x, \mu ^{(\beta )} , \tau ^{(\beta )}) $$
for some $ \mu ^{(\beta )} \in \bbR ^p $ satisfying
$ (- 2 \varepsilon + \mu ^{( \beta )}, 2 \varepsilon + \mu ^{( \beta )} ) \subseteq 
(- \varepsilon _ \beta , \varepsilon _\beta )^p .$

Since the product
\begin{equation}
\label{SmallChain}
\bx ^{(\alpha _2)} (E ^\nu _{\bx ^{(\alpha _1)} (\mu _1, \tau _1 )} (x) , \mu _2, \tau _2 )
\bx ^{( \alpha _1 )} (x, \mu _1 , \tau _1 ) 
\end{equation}
is by definition just an iteration of left multiplication by admissible sections of the form
$\exp Z $ then followed by $\exp \tau _2 \cdot Y $.
In particular, we may change coordinates so that
$$ \bx ^{( \alpha _1 )} (x, \mu _1 , \tau _1 ) = \bx ^{( \beta _1 )} (x, \mu ' _1 , \tau '_1 ), $$
where $ (- \varepsilon + \mu ', \varepsilon + \mu ' ) \subseteq (- \varepsilon _\beta , \varepsilon _\beta )^p .$
Then by Equation (\ref{CoordEst}) we have 
$$ \exp Z _1 \bx ^{( \beta _1 )} (x, \mu ' _1 , \tau '_1 )
= \bx ^{( \beta _1 )} (x, \mu '' _1 , \tau ''_1 ) ,$$
such that $ \| \tau ''_1 - \tau _1 \| \leq \rho (x) C _6 e ^{M _6 \| \tau _1 \| } $
for some $C _6 , M _6 \geq 0 $.
Obviously the same arguments can be iterated to compute (\ref{SmallChain}).

At the same time, 
one can estimate of the differential of each successive left multiplication by Lemma \ref{DiffEstLem} and 
Equation (\ref{DiffEstVert}).
Hence we conclude that
\begin{thm}
\label{ChainGrowthLem}
There exist some $r , H > 0$ such that whenever $x \in B (r e ^{- k H }), \| \tau _i \| \leq 1 , i = 1, 2, \cdots k$,
$$ \bx ^{(\alpha _k)} ( \mu _k , \tau _k ) \cdots \bx ^{(\alpha _1)} (\mu _1, \tau _1 ) (x) 
= \bx ^{(\beta )} (x, \mu , \tau ) $$
for some exponential coordinate chart $\bx ^{(\beta )}$.
Moreover, one has
$$d (\bx ^{(\alpha _k)} ( \mu _k , \tau _k ) \cdots \bx ^{(\alpha _1)} (\mu _1, \tau _1 )) (\partial _x )
= \partial _x + V ,$$
where $\partial _x $ on the right hand side is the coordinate vector field on $\bx ^{(\beta )}$,
and $V \in T ^\bot \cG $ (i.e. $V$ is spanned by 
$\partial _{\tau ^{( \beta )} _j } $ and $\partial _{\mu ^{(\beta )} _{j'}} $),
is such that 
$$| V | \leq C _7 e ^{M _7 k } ,$$ 
for some constant $C _7 , M _7 > 0$.
\end{thm}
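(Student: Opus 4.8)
The plan is to prove the statement by induction, treating each left multiplication by $\bx^{(\alpha_j)}(\mu_j,\tau_j)$ as a \emph{bounded} composition of elementary left multiplications and then iterating the estimates of this section. First I would observe, exactly as in the discussion preceding (\ref{SmallChain}), that left multiplication by the admissible section $\bx^{(\alpha_j)}(\mu_j,\tau_j)=\exp\tau_j\cdot Y\,\exp\mu_j\cdot X^{(\alpha_j)}\,\exp Z_{I_{\alpha_j}}$ is the composition of left multiplications by the individual factors $\exp Z$, where $Z$ runs through the fixed sections $Z_i$, the $X^{(\alpha_j)}_{j'}$ with coefficients bounded by the window size $\varepsilon_{\alpha_j}$, and the $Y_i$ with coefficients bounded by $\|\tau_j\|\le 1$. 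Since there are only finitely many charts $\alpha$, the number $N$ of such elementary factors contributed by each step is uniformly bounded, so the full product $\bx^{(\alpha_k)}(\mu_k,\tau_k)\cdots\bx^{(\alpha_1)}(\mu_1,\tau_1)$ is a composition of at most $Nk$ elementary left multiplications, each with ``time'' parameter bounded by $1$ (or by a fixed $\varepsilon$).

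Next I would run an induction on the number $m\le Nk$ of elementary steps performed, maintaining the invariant that after $m$ steps the image of $x$ equals $\bx^{(\beta_m)}(x,\mu^{(m)},\tau^{(m)})$ for some chart $\beta_m$, with $\mu^{(m)}$ inside the allowed window and $\|\tau^{(m)}\|\le m+\varepsilon$. To pass from $m$ to $m+1$: first use the covering hypothesis fixed before (\ref{SmallChain}) to change to a chart $\beta$ in which the $\mu$-coordinate sits inside the $2\varepsilon$-window, and then apply (\ref{CoordEst}) (in its $\exp\tau_iY_i$ form, and its $\exp Z$ analogue); the resulting displacement of the $\mu$- and $\tau$-coordinates is at most $\rho(x)\,C\,e^{M\|\tau^{(m)}\|}\le\rho(x)\,C\,e^{M(m+\varepsilon)}$. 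Summing over the at most $Nk$ steps, the total displacement is bounded by $Nk\,\rho(x)\,C\,e^{M(Nk+\varepsilon)}$; choosing $H>MN$ and then $r$ small makes this $<\varepsilon$ whenever $x\in B(re^{-kH})$, which simultaneously keeps every intermediate $\mu^{(m)}$ in range and confirms $\|\tau^{(m)}\|\le m+\varepsilon$, closing the induction and proving the first assertion with $\|\tau\|\le Nk+\varepsilon$.

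For the differential I would carry an extra clause in the same induction, namely $d(\text{first }m\text{ steps})(\partial_x)=\partial_x+V_m$ with $V_m\in T^\bot\cG$ and $|V_m|\le\Lambda_m$. Writing the $(m+1)$-st elementary step as left multiplication by an admissible section $g$ (either $\exp\tau_iY_i$ or $\exp Z$), we split $dg(\partial_x+V_m)=dg(\partial_x)+dg(V_m)$. By Lemma \ref{DiffEstLem} and its evident analogue for $\exp Z$, $dg(\partial_x)=\partial_x+X_m$ with $X_m\in T^\bot\cG$ and $|X_m|\le C_3 e^{M_3(\|\tau^{(m)}\|+1)}\le C_3 e^{M_3(m+1+\varepsilon)}$; and since $V_m$ is already vertical and $g$ preserves $\bs$-fibers, (\ref{DiffEstVert}) gives $dg(V_m)\in T^\bot\cG$ with $|dg(V_m)|\le\lambda|V_m|$ for a constant $\lambda$ that is \emph{uniform} in $(x,\mu,\tau)$, the time parameter of $g$ being bounded. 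Hence $V_{m+1}=X_m+dg(V_m)$ and we may take $\Lambda_{m+1}=C_3 e^{M_3(m+1+\varepsilon)}+\lambda\Lambda_m$, $\Lambda_0=0$; solving this linear recursion over $m\le Nk$ steps yields $|V|\le C_7 e^{M_7 k}$ for suitable $C_7,M_7>0$, as claimed.

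The only genuinely delicate point, and the one I expect to be the main obstacle, is the bookkeeping in the middle step: one must choose $r$ and $H$ so that, \emph{over all $k$ steps simultaneously}, the accumulated $\tau$-displacements remain summable \emph{and} every intermediate $\mu^{(m)}$ stays inside its fixed window, which is precisely what forces the relation $H>MN$ between the decay rate and the growth constants of (\ref{CoordEst}) and explains why the hypothesis must be the $k$-dependent smallness $x\in B(re^{-kH})$ rather than a fixed neighbourhood of $\bar\rM_1$. Everything else is a routine iteration of Lemma \ref{DiffeoLem}, Lemma \ref{DiffEstLem}, (\ref{CoordEst}) and (\ref{DiffEstVert}).
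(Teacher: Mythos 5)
Your proposal is correct and follows essentially the same route as the paper: decompose each factor $\bx^{(\alpha_j)}(\mu_j,\tau_j)$ into elementary left multiplications, iterate the estimate (\ref{CoordEst}) together with the fixed covering of charts from Section \ref{ChangeCoord} to keep the intermediate coordinates in range, and combine Lemma \ref{DiffEstLem} with (\ref{DiffEstVert}) in a linear recursion to bound the vertical part of the differential. The only detail you leave implicit is that each change of chart itself contributes a vertical correction to $\partial_x$ (bounded by $C_5 e^{M_5\|\tau\|}$, as noted in Section \ref{ChangeCoord}), but this is of the same exponential form and is absorbed by your recursion for $\Lambda_m$.
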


\section{The groupoid heat kernel}

In terms of reduced kernel and convolution product, 
the heat kernel is defined as:
\begin{dfn}
A {\it (groupoid) Heat kernel} of $\Delta $ is a continuous section 
$$ Q \in \Gamma ^0 (\cG \times (0 , \infty ) ) ,$$
such that $ Q (a , t) , Q (a ^{-1} , t) $ are smooth when restricted to all $\cG _x \times (0 , \infty ) $,
and satisfies: 
\begin{enumerate}
\item
The heat equation
$$ (\partial _t + \Delta ) Q (a, t) = 0; $$
\item
The initial condition
$$ \lim _{t \rightarrow 0^+} Q \circ u = u, 
\quad \forall u \in \Gamma ^\infty _c (\cG), $$
where $\circ $ denotes the convolution product
$$ \kappa _0 \circ \kappa _1 (a)
:= \int _{ b \in \cG _(a)} \kappa _0 (a b ^{-1} ) \kappa _1 (b) d b , 
\quad \Forall \kappa _1 , \kappa _2 \in C ^0 _c (\cG ), a \in \cG .$$
\end{enumerate}
\end{dfn}

\subsection{Construction of the heat kernel}
In this section, 
we briefly review how one constructs the heat kernel of a generalized Laplacian operator \cite{BGV;Book}.
For simplicity, we only consider the scalar case.

By \cite{BGV;Book}, there exists a formal power series 
$ \Phi (a, t) := \sum _{i=1} ^\infty t^i \Phi _i (a) $
that satisfies the equation
$$ (\partial _t + \Delta ) (4 \pi t)^{- \frac{n}{2}} \: e^{- \frac{d (a , \bs (a))^2 }{4 t}} \Phi (a, t) = 0. $$
Fix a cutoff function $\phi $ supported on $B (\rM , \varrho _0)$ such that $\phi = 1 $ on the smaller set
$B (\rM , \frac{\varrho _0}{2}) := \{ a \in \cG : d (a , \bs (a)) \leq \frac{\varrho _0}{2} \}.$
Write 
$$ G _N (a, t) := \phi (a) (4 \pi t)^{- \frac{n}{2}} \: e^{- \frac{d (a , \bs (a))^2 }{4 t}}
\sum _{i=1}^N t^i \Phi _i (a), \quad t \in (0, \infty ).$$
For each $N > \frac{n}{2}$, define the sections 
$ R ^{(k)} _n \in \Gamma ^\infty (\cG \times [0, \infty ))$:
\begin{align*}
R ^{(1)} _N :=& \: (\partial _t + \Delta ) G _N \\
R ^{(k)} _N :=& \: \int _0 ^t R _N (\cdot , t - s ) \circ R ^{(k-1)} _N (\cdot , s ) d s \\
=& \: \int _0 ^t \int _{\bs ^{-1} (a)} R_n (a b^{-1} , t - s ) R^{(k-1)} _N (b , \tau ) \mu _{\bs (a)} (b) d s  \\
Q ^{(0)} _N :=& \: G _N \\ 
Q ^{(k)} _N :=& \: \int _0 ^t G _N (\cdot , t - s ) \circ R ^{(k)} _N (\cdot , s ) d s, \quad k \geq 1 \\
=& \: \int _{s \in \Sigma _k } G _N (\cdot , t - \| s \| ) \circ R ^{(1)} _N (\cdot , s _1 ) \circ 
\cdots \circ R ^{(1)} _N (\cdot , s _k ) d s,
\end{align*}
where 
$\Sigma _k := \{ (s _1 , \cdots , s _k ) \in \bbR ^k , s _1, \cdots , s _k \geq 0 ,
s _1 + \cdots + s _k \leq t \} ,$
and $d s $ is the Lebesgue measure.
Then the heat kernel is constructed by taking the sum
$ Q := \sum _{k=0} ^\infty (-1)^k Q ^{(k)} _N (\cdot , t). $

To simplify notation, we shall write $R _N := R ^{(1)} $, and omit the reference to $t $.

\subsection{$C ^l $ estimation near $\cG _1 $}
Let $\{ \bx ^{(\alpha )} \}$ be the family coordinate patches defined in Section \ref{ChangeCoord}.
Define 
$ U ^0 _\alpha := \{ \bx ^{(\alpha )} (x , \mu , \tau ) : \tau \in (-1 , 1)^q \} $
and $U ^0 := \bigcup _{\alpha } U ^0 _\alpha .$
Fix a partition of unity $\theta _\alpha $ of $U$ subordinated to $U _\alpha $.

In the construction of $G _N$, 
we may assume that $\varrho $ is sufficiently small,
such that 
$$ (\bs ^* \rho ) ^{-1} [0, \frac{r}{2})  \bigcap B (\rM , \varrho ) \subset U ^ 0 .$$
Fix another cutoff function $\chi \in C ^\infty _c (\bbR )$ such that $\chi = 1 $ on $[0, \frac{1}{2}]$
and $0$ on $[1, \infty )$.

Given smooth compactly supported functions $\kappa _0 , \cdots , \kappa _k$ on $\cG$,
it is straightforward to write for any $a \in \cG$:
\begin{align*} 
\kappa _0 \circ \cdots \circ & \kappa _k (a) \\
=& \int _{b _1 \in \cG _{\bs (a)}} \int _{ b _2 \in \cG _{\bt (b _1)}}
\cdots \int _{ b _k \in \cG _{\bt (b _{k -1})}} \kappa _0 (a b _1 ^{-1} \cdots b _k ^{-1} )
\kappa _1 (b _1 ) \cdots \kappa _k (b _k) d b_k \cdots d b _1 .
\end{align*}
Suppose that $\kappa _i $ is supported on $U ^0 $ for all $i$. 
Then one can write
\begin{align*}
(\kappa _0 \circ \cdots \kappa _k )(a)
&= \sum _{\alpha _1 , \cdots , \alpha _k } \int _{(\mu _i , \tau _i) \in (-1 , 1) ^{k (p + q)}}
\kappa _0 \big( a ( \bx ^{(\alpha _k)} (\mu _k , \tau _k ) \cdots \bx ^{(\alpha _1)} (\mu _1, \tau _1 )(x))^{-1} \big)\\
\times & \prod _{i = 1} ^k \theta _{\alpha _i} \kappa _i 
\big(\bx ^{(\alpha _i)} (E ^\nu _{\bx ^{(\alpha _{i - 1})} (\mu _{i - 1}, \tau _{i - 1} )} 
\cdots E ^\nu _{\bx ^{(\alpha _1)} (\mu _1, \tau _1 )} (x) , \mu _i , \tau _i ) \big) 
\prod _{i = 1} ^k d \mu _i d \tau _i.
\end{align*}
Consider differentiating the integrand with respect to $x$.

It is clear that 
$( d E ^\nu _{\bx ^{(\alpha _{i - 1})} (\mu _{i - 1}, \tau _{i - 1} )} 
\cdots E ^\nu _{\bx ^{(\alpha _1)} (\mu _1, \tau _1 )} ) (\partial _x ) \leq C _8 e ^{M _8 i }$,
for some constant $C _8 , M _8 > 0 $.
It follows that
$$ \big| L _{\partial _x } (\theta _{\alpha _i} \kappa _i (\bx ^{(\alpha _i)} 
( E ^\nu _{\bx ^{(\alpha _{i - 1})} (\mu _{i - 1}, \tau _{i - 1} )} 
\cdots E ^\nu _{\bx ^{(\alpha _1)} (\mu _1, \tau _1 )} (x) , \mu _i , \tau _i ) ) \big|
\leq |d \theta _{\alpha _i} \kappa _i | | \partial _x | C _8 e ^{M _8 i }.$$

We turn to the derivatives of
$\hat \kappa (a) 
:= \kappa _0 (a ( \bx ^{(\alpha _k)} ( \mu _k , \tau _k ) \cdots \bx ^{(\alpha _1)} (\mu _1, \tau _1 ) (x) ) ^{-1})$
on some exponential coordinates patch $\bx ^{(\beta )}$.
One has
\begin{align*}
\hat \kappa (\bx ^{(\beta )} & (x, \mu, \tau )) \\
=& \kappa ( \bx ^{(\beta )} ( \mu, \tau )
\hat \bx ^{(\alpha _k)} ( \mu _k , \tau _k ) \cdots \hat \bx ^{(\alpha _1)} (\mu _1, \tau _1 ) 
(E ^\nu _{\bx ^{(\alpha _{i - 1})} (\mu _{i - 1}, \tau _{i - 1} )} 
\cdots E ^\nu _{\bx ^{(\alpha _1)} (\mu _1, \tau _1 )} (x)) ,
\end{align*}
where 
$\hat \bx ^{(\alpha _i)} ( \mu , \tau ) 
:= \exp _{Z _1} \cdots \exp _{Z _{|I|}} \exp _{\mu _1 X _1 ^{(\alpha _i )} } \cdots \exp _{\mu _p X _p ^{(\alpha _i )} }
\exp _{\tau _1 Y _1} \cdots \exp _{\tau _q Y _q }$.
Suppose further that $x \in B (r e ^{- k (H + (|I| + p + q) \omega ) }, \rM _1 )$, then 
$$E ^\nu _{\bx ^{(\alpha _{i - 1})} (\mu _{i - 1}, \tau _{i - 1} )} 
\cdots E ^\nu _{\bx ^{(\alpha _1)} (\mu _1, \tau _1 )} (x) \in B (r e^{- k H } , \rM _1 ) ,$$ 
and from Theorem \ref{ChainGrowthLem}, one can write:
$$ \hat \kappa (\bx ^{(\beta )} (x, \mu, \tau )) 
= \kappa \big( \bx ^{(\beta ')} (E ^\nu _{\bx ^{(\alpha _{i - 1})} (\mu _{i - 1}, \tau _{i - 1} )} 
\cdots E ^\nu _{\bx ^{(\alpha _1)} (\mu _1, \tau _1 )}) (x) , \varphi (x, \mu, \tau) , \psi (x, \mu , \tau ) \big).$$
Note that since $\kappa _0 $ is supported on $U ^0$,
the right hand side vanishes if $\| \tau \| > k H _1 $ for some $H _1$. 
Moreover, by the second part of Theorem \ref{ChainGrowthLem}, one has the estimate
$$ \big| L _{\partial _x} \hat \kappa (\bx ^{(\beta )} (x, \mu, \tau )) \big| 
\leq C _9 e ^{M _9 k } |d \kappa _0 |.$$
In particular, put
\begin{align*}
\kappa _0 (a) :=& \chi (2 r ^{-1} \bs ^* \rho (a)) G _N (a), \\
\kappa _i (a) :=& \chi (2 r ^{-1} \bs ^* \rho (a)) R _N (a) , \quad i \leq k - 1, \\
\kappa _k (a) :=& \chi (2 r ^{-1} H ^k \bs ^* \rho (a)) R _N (a).
\end{align*}
Differentiating under the integral sign and summing over all $\alpha _i $, we conclude that
\begin{lem}
\label{OnEst}
One has the uniform estimate
\begin{align}
\big|\partial _x \big( \chi (2 r ^{-1} \bs ^* \rho ) G _N \circ \chi (2 r ^{-1} & \bs ^* \rho ) R _N 
\circ \cdots \circ \chi (2 r ^{-1} \bs ^* \rho ) R _N \\ \nonumber
&\circ \chi (2 r ^{-1} H ^k \bs ^* \rho) R _N \big) (\bx ^{(\beta )} (x, \mu, \tau )) \big| \leq C _{10} e ^{M _{10} k },
\end{align}
for some $C _{10} , M _{10}$ (note that $M _8 $ is independent of $s , N$). 
Moreover the same estimate holds for 
the $\partial _{\tau _j } $ and $\partial _{\mu _{j'}} $ derivatives.
\end{lem}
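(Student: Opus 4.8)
The plan is to argue directly from the multiple-integral expression for the iterated convolution displayed just above the statement. Writing $a=\bx^{(\beta)}(x,\mu,\tau)$ for the output point and differentiating under the integral sign (legitimate since, after the cutoffs, the integrand is smooth and compactly supported in the parameters $(\mu_i,\tau_i)\in(-1,1)^{k(p+q)}$), one applies the Leibniz rule to the integrand, which is a product of $k+1$ factors: the factor $\hat\kappa(a)=\kappa_0\big(a(\bx^{(\alpha_k)}(\mu_k,\tau_k)\cdots\bx^{(\alpha_1)}(\mu_1,\tau_1)(x))^{-1}\big)$ coming from $\kappa_0=\chi(2r^{-1}\bs^*\rho)G_N$, and, for each $i=1,\dots,k$, a factor $\theta_{\alpha_i}\kappa_i\big(\bx^{(\alpha_i)}(E^\nu_{\bx^{(\alpha_{i-1})}(\mu_{i-1},\tau_{i-1})}\cdots E^\nu_{\bx^{(\alpha_1)}(\mu_1,\tau_1)}(x),\mu_i,\tau_i)\big)$ coming from $\kappa_i=\chi(2r^{-1}\bs^*\rho)R_N$ (and $\kappa_k=\chi(2r^{-1}H^k\bs^*\rho)R_N$ for the last one). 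Then $\partial_x$ of the integrand is a sum of $k+1$ terms, in each of which exactly one factor is differentiated, and it is enough to bound each such term.

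Each of the two possible differentiated factors is controlled by an estimate already assembled above the statement. If the differentiated factor is one of the $\theta_{\alpha_i}\kappa_i(\cdots)$, then $\big(dE^\nu_{\bx^{(\alpha_{i-1})}(\mu_{i-1},\tau_{i-1})}\cdots E^\nu_{\bx^{(\alpha_1)}(\mu_1,\tau_1)}\big)(\partial_x)\le C_8 e^{M_8 i}$ gives $\big|L_{\partial_x}(\theta_{\alpha_i}\kappa_i(\cdots))\big|\le |d(\theta_{\alpha_i}\kappa_i)|\,|\partial_x|\,C_8 e^{M_8 i}$, in which $|d(\theta_{\alpha_i}\kappa_i)|$ and $|\partial_x|$ are bounded on the relevant compact region. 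If the differentiated factor is $\hat\kappa$, then, after shrinking $r$ so that the base point satisfies $x\in B\big(re^{-k(H+(|I|+p+q)\omega)},\rM_1\big)$, Theorem \ref{ChainGrowthLem} applies to the length $k$ chain $\bx^{(\alpha_k)}(\mu_k,\tau_k)\cdots\bx^{(\alpha_1)}(\mu_1,\tau_1)$: it carries a single patch $\bx^{(\beta)}$ into a single patch $\bx^{(\beta')}$ and takes $\partial_x$ to $\partial_x+V$ with $V\in T^\bot\cG$ and $|V|\le C_7 e^{M_7 k}$, so that $\big|L_{\partial_x}\hat\kappa(\bx^{(\beta)}(x,\mu,\tau))\big|\le C_9 e^{M_9 k}|d\kappa_0|$. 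Every undifferentiated factor is bounded by the corresponding sup-norm of $\theta_{\alpha_i}$, $R_N$ or $G_N$ on the compact set $B(\rM,\varrho_0)$, which is finite for fixed $t$ and $N$. Hence each of the $k+1$ Leibniz terms is at most $C e^{Mk}$, where $M\ge\max\{M_7,M_8,M_9\}$ and $C$ depend only on the chosen geometric data, on $t$ and on $N$; as with $M_8$, the exponent $M$ may be taken independent of the time variables $s_1,\dots,s_k$ and of $N$.

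Finally one accounts for the remaining multiplicities. Summing over $(\alpha_1,\dots,\alpha_k)$ contributes a factor of at most $P^k$, where $P$ is the (finite) number of coordinate patches, and integrating over the cube $(-1,1)^{k(p+q)}$ contributes its volume $2^{k(p+q)}$; both are again exponential in $k$, so the $x$-derivative of the iterated convolution at $\bx^{(\beta)}(x,\mu,\tau)$ is at most $C_{10}e^{M_{10}k}$, with $M_{10}$ independent of $s$ and $N$. The $\partial_{\tau_j}$ and $\partial_{\mu_{j'}}$ derivatives are handled the same way and more easily: these vectors lie in $T^\bot\cG$ and the fibre coordinates $(\mu,\tau)$ of the output point enter the integrand only through $\hat\kappa$, so by Equation (\ref{DiffEstVert}) together with Theorem \ref{ChainGrowthLem} their effect is controlled, again by $C e^{Mk}$, uniformly in $(x,\mu,\tau)$.

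The only genuinely substantive input is Theorem \ref{ChainGrowthLem}: once it is known that a length $k$ chain of left multiplications by the local admissible sections $\bx^{(\alpha_i)}(\mu_i,\tau_i)$, based within distance of order $e^{-kH}$ of $\rM_1$, remains inside a single exponential coordinate patch and distorts $\partial_x$ by a $T^\bot\cG$-vector of norm $\le C_7 e^{M_7 k}$, the present lemma is pure bookkeeping with the Leibniz rule and the sup-norm bounds for $G_N$ and $R_N$. The one place demanding a word of care is the normalization of the $\tau$-variables: the patches $\bx^{(\alpha)}$ are parametrized over the cube $(-1,1)^q$, on which $\|\tau\|=|\tau_1|+\dots+|\tau_q|$ ranges up to $q$ rather than $1$, so one must either shrink each patch to $\{\|\tau\|<1\}$ or absorb the factor $q$ into the constants $H$ and $M_i$; in either case only the exponential-in-$k$ shape of the bound is used, and the conclusion is unaffected.
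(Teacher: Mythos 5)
Your proposal is correct and follows essentially the same route as the paper: the paper likewise writes the iterated convolution as a multiple integral over the exponential coordinate patches, differentiates under the integral sign, bounds the derivative of each factor $\theta_{\alpha_i}\kappa_i(\cdots)$ by $C_8e^{M_8 i}$ via the flow estimate and the derivative of $\hat\kappa$ by $C_9e^{M_9 k}$ via Theorem \ref{ChainGrowthLem}, and then sums over the patches. Your explicit accounting of the $P^k$ patch multiplicity, the cube volume, and the $\|\tau\|\le q$ normalization only makes explicit what the paper leaves implicit.
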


\subsection{$C ^1 $ estimation away from $\cG _1 $}
In the following, we make the additional assumption that $\cG $ is non-degenerate or uniformly degenerate.
Denote by $\iota : \rM _0 \times \rM _0 \to \cG $ the embedding of the invariant sub-manifold. 
Since the target and source maps on 
$\rM _0 \times \rM _0$ are respectively the projection onto the first and second factor,
one has 
$$(\bt \times \bs ) \circ \iota = \id _{ \rM _0 \times \rM _0 }.$$
Using the fact that $d \bt ( \tilde V )= \nu (d \br  ^{-1} _a (\tilde V))$, 
for any $\tilde V \in T _a \cG , a \in \cG$ satisfying $ d \bs (\tilde V) = 0$,
it follows that 
\begin{equation}
\label{nuEst1}
d \iota ( V \oplus 0) = d \br _{\iota (x, y)} (\nu ^{-1} (V)),
\end{equation}
for any $V \oplus 0 \in T _{(x , y) } \rM _0 \times \rM _0 $.
Similarly,  
\begin{equation}
\label{nuEst2}
d \iota (0 \oplus W) = d (\bi \circ \br _{\iota (y, x)}) (\nu ^{-1} (W)).
\end{equation}
It is straightforward to compute the coordinate vector fields for the exponential coordinates:
\begin{align*}
d \bt (\partial _{\tau _i} (x, \mu , \tau)) 
=& \nu ( E _{\tau _q Y _q } \cdots E _{\tau _{i+1} Y _{i+1}} Y _i ) 
(E ^\nu _{\tau _i Y _i} \cdots E ^\nu _{\tau _1 Y _1} E ^\nu _{\mu \cdot X ^{(\alpha )}} 
E ^\nu _{Z _{I ^{(\alpha )}}} (x)) \\
d \bt (\partial _{\mu _{i'}} (x, \mu , \tau)) 
=& \nu ( E ^\nu _{\tau \cdot Y} E _{\mu _p X _p ^{(\alpha )} } \cdots E _{\mu _{i'+1} X _{i'+1} ^{(\alpha )}} 
X _{i'}^{(\alpha )}) 
(E ^\nu _{\mu _{i'} X ^{(\alpha )} _{i'}} \cdots E ^\nu _{\mu _1 X _1 ^{(\alpha )}} E ^\nu _{Z _{I ^{(\alpha )}}} (x)).
\end{align*}
Similarly, one computes
\begin{align*} 
d \bt (\partial _x (x, \mu , \tau )) =& \: d (E ^\nu _{\tau \cdot Y}
E ^\nu _{\mu \cdot X ^{(\alpha )}} E ^\nu _{Z _{I ^{(\alpha )} }}) (\partial _{x } (x)) \\
d \bs (\partial _x (x, \mu , \tau )) =& \: \partial _{x } (x),
\end{align*}
where $\partial _{x } (x)$ on the right hand side is regarded as a tangent vector on $\rM $.

On $\cG _0 = \rM _0 \times \rM _0 $, one can write 
\begin{align*}
(\kappa _0 \circ \cdots \circ \kappa _k ) (a) 
=& (\bt \times \bs ) ^* \int \iota ^* \kappa _0 (\bt (a) , b _1) 
\big( \prod _{l = 1} ^{k -1 } \iota ^* \kappa _l (b _l , b _{l+1}) \big)
\iota ^* \kappa _k (b _k , \bs (a)) d b _1 \cdots d b _k . 
\end{align*}
It is then straightforward to differentiate along the coordinate vector fields:
\begin{align*} 
\partial _{x } & (\kappa _0 \circ \cdots \circ \kappa _k ) (a) \\
=& 
\int \big( L _{ d E ^\nu _{\tau \cdot Y} E ^\nu _{\mu \cdot X ^{(\alpha )}} E ^\nu _{Z _{I ^{(\alpha )}}} 
\partial _x \oplus 0} \iota ^* \kappa _0 \big) (\bt (a) , b _1) 
\big( \prod _{l = 1} ^{k -1 } \iota ^* \kappa _l (b _l , b _{l+1}) \big)
\iota ^* \kappa _k (b _k , \bs (a)) d b _1 \cdots d b _k \\
&+ \int \iota ^* \kappa _0 (\bt (a) , b _1) 
\big( \prod _{l = 1} ^{k -1 } \iota ^* \kappa _l (b _l , b _{l+1}) \big) 
\big( L _{ 0 \oplus \partial _x } \iota ^* \kappa _k (b _k , \bs (a)) \big) d b _1 \cdots d b _k \\
\partial _{\tau _i} & (\kappa _0 \circ \cdots \circ \kappa _k ) (a) \\
=& \int \big( L _{ \nu ( E _{\tau _q Y _q } \dots  E _{\tau _{i+1} Y _{i+1}} Y _i ) \oplus 0 } 
\iota ^* \kappa _0 \big) (\bt (a) , b _1) 
\big( \prod _{l = 1} ^{k -1 } \iota ^* \kappa _l (b _l , b _{l+1}) \big)
\iota ^* \kappa (b _k , \bs (a)) d b _1 \cdots d b _k \\
\partial _{\mu _{i'}} & (\kappa _0 \circ \cdots \circ \kappa _k ) (a) \\
=& \int \big( L _{ \nu ( E ^\nu _{\tau \cdot Y} E _{\mu _p X _p ^{(\alpha )} } 
\cdots E _{\mu _{i'+1} X _{i'+1} ^{(\alpha )}} 
X _{i'}^{(\alpha )} ) \oplus 0 } 
\iota ^* \kappa _0 \big) (\bt (a) , b _1) \\
& \big( \prod _{l = 1} ^{k -1 } \iota ^* \kappa _l (b _l , b _{l+1}) \big)
\iota ^* \kappa (b _k , \bs (a)) d b _1 \cdots d b _k .
\end{align*}

Using Equation (\ref{nuEst2}) and the hypothesis that $\cG$ is non-degenerate or uniformly degenerate, 
we have the estimate
\begin{equation}
\label{Off1}
|L _{ 0 \oplus \partial _x } \iota ^* \kappa _k (b _k , \bs (a)) |
= |( L _{ \nu ^{-1} (\partial _x) ^\br}  \bi ^* \kappa _k) (\iota (\bs (a)) , b _k ) |
\leq |\partial _x||d \kappa _k | (\omega '\rho ( \bs (a) ) )^{- \lambda '},
\end{equation}
Using same arguments with $ 0 \oplus \partial _x$ replaced by 
$d E ^\nu _{\tau \cdot Y} E ^\nu _{\mu \cdot X ^{(\alpha )}} E ^\nu _{Z _{I ^{(\alpha )}}} \partial _x \oplus 0$, 
one gets 
\begin{equation}
\label{Off2}
\big| 
L _{ d E ^\nu _{\tau \cdot Y} E ^\nu _{\mu \cdot X ^{(\alpha )}} E ^\nu _{Z _{I ^{(\alpha )}}} \partial _x \oplus 0 } 
\iota ^* \kappa _0 (\bt (a) , b _1 ) \big|
\leq C _8 e ^{M _8 \| \tau \| } | \partial _x | |d \kappa _0 | (\omega ' \rho (\bs (a))) ^{- \lambda '}.
\end{equation}
(Here we used the fact that 
$ | d E ^\nu _{\tau \cdot Y} E ^\nu _{\mu \cdot X ^{(\alpha )}} E ^\nu _{Z _{I ^{(\alpha )}}} \partial _x |
\leq C _8 e ^{M _8 \| \tau \| } | \partial _x |$.)
Again, put 
\begin{align*}
\kappa _0 &:= (1 - \chi (2 r ^{-1} \bs ^* \rho )) G _N , \text { or } \chi (2 r ^{-1} \bs ^* \rho ) G _N , \\
\kappa _i &:= (1 - \chi (2 r ^{-1} \bs ^* \rho )) G _N , \text { or } \chi (2 r ^{-1} \bs ^* \rho ) R _N , 
\quad i \leq k - 1, \\
\kappa _k &:= (1 - \chi (2 r ^{-1} H ^k \bs ^* \rho )) R _N , \text { or } \chi (2 r ^{-1} H ^k \bs ^* \rho ) R _N ,
\end{align*}
but except the combination appeared in the last section. 
Observe that 
$$\iota ^* \kappa _0 (\bt (a) , z _1) 
\big( \prod _{l = 1} ^{k - 1 } \iota ^* \kappa _l (z _l , z _{l+1}) \big)
\iota ^* \kappa _k (z _k , \bs (a)) = 0,$$
unless $\rho (\bs (a) ) \geq H ^{- k} e ^{- k \omega \chi _0 }$. 
Together with Equations (\ref{Off1}) and (\ref{Off2}),
one gets the uniform bound
$$ \Big| L _{\partial _x} \big( \iota ^* \kappa _0 (\bt (\bx ^{(\alpha )} (x, \mu , \tau ) , b _1) 
\big( \prod _{l = 1} ^{k -1 } \iota ^* \kappa _l (b _l , b _{l+1}) \big)
\iota ^* \kappa _k (b _k , x) \big) \Big|
\leq C _{11} e ^{M _{11} k} .$$
In other words, one has
\begin{align}
\label{OffEst} \nonumber
\big| \partial _x \big( - \chi (2 r ^{-1} \bs ^* \rho ) G _N \circ \chi (2 r ^{-1} \bs ^* \rho ) R _N 
\circ & \cdots \circ \chi (2 r ^{-1} \bs ^* \rho ) R _N \circ \chi (2 r ^{-1} H ^k \bs ^* \rho) R _N \\ 
+ G _N \circ R _N & \circ \cdots \circ R _N \big) (\bx ^{(\beta )} (x, \mu, \tau )) \big| \leq C _{12} e ^{M _{12} k } ,
\end{align}
and similar for other coordinate vector fields.

\subsection{Regularity of the heat kernel}
Adding Equation (\ref{OffEst}) and Lemma \ref{OnEst} together,
and using the fact that the volume of the set 
$\Sigma \subset \bbR ^k $ equals $(k !) ^{-1} t ^k $, it follows that the sum 
$$ \sum _{k=0} ^\infty (-1)^k Q ^{(k)} _N (\cdot , t). $$
converges uniformly and absolutely in $C ^1$ for $N$ sufficiently large.

Clearly, similar arguments holds for all $l = 1, 2, \cdots $
(in particular Equation (\ref{MultiODE}) can be differentiated repeatedly 
and estimates in the form of Lemma \ref{DiffEstLem} still holds).
Hence we conclude that
\begin{thm}
Let $\cG$ be a boundary groupoid of the form 
$\cG = \rM _0 \times \rM _0  \bigsqcup \rM _1 \times \rM _1 \times \bbR ^q $,
that is either non-degenerate or uniformly degenerate.
Suppose furthermore that there exists a finite collection of exponential coordinates charts 
$\{ \bx ^{(\alpha )} \} $ satisfying Assumption \ref{Hypo}.
Then the heat kernel $Q $ lies in $C ^\infty (\cG \times (0, \infty ))$.
\end{thm}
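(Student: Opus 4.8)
The plan is to show that the Levi parametrix series $Q=\sum_{k=0}^\infty(-1)^kQ^{(k)}_N(\cdot,t)$ constructed in Section~3.1 converges in $C^l(\cG\times(0,\infty))$ for every $l$, with $N$ chosen large enough depending on $l$; letting $l\to\infty$ then gives the theorem. Because each $Q^{(k)}_N$ is an integral over the simplex $\Sigma_k$, whose volume is $(k!)^{-1}t^k$, it suffices to bound the relevant derivatives of the integrand uniformly by $C(l)e^{M(l)k}$: the majorant series $\sum_k C(l)e^{M(l)k}(k!)^{-1}t^k$ then converges locally uniformly in $t$. On the interior of the pair stratum $\cG_0=\rM_0\times\rM_0$, away from $\cG_1$, the Laplacian $\Delta$ restricts to the Laplacian of the fixed Riemannian manifold $\rM_0$, so $Q$ there is the (smooth, decaying) Euclidean-type heat kernel pulled back by $\bt\times\bs=\id$, and smoothness is classical; hence it remains to estimate the integrand on the exponential coordinate charts $\bx^{(\beta)}$ covering a neighbourhood of $\cG_1$, of which, by Assumption~\ref{Hypo}, finitely many suffice.

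For the $C^1$ estimate I would insert the cutoffs $\chi(2r^{-1}\bs^*\rho)$ and $\chi(2r^{-1}H^k\bs^*\rho)$ into the $k$ convolution factors of $Q^{(k)}_N$ and expand into finitely many terms. In the term where every factor is cut off near $\cG_1$ one works on a single chart and Theorem~\ref{ChainGrowthLem} applies: once $x\in B(re^{-k(H+(|I|+p+q)\omega)},\rM_1)$ the chain of $k$ left multiplications stays in one chart $\bx^{(\beta)}$ and its $\partial_x$-derivative differs from $\partial_x$ by a vertical vector of norm $\le C_7 e^{M_7 k}$, while the source-map factors contribute $|dE^\nu_{\cdots}(\partial_x)|\le C_8 e^{M_8 i}$; this is exactly Lemma~\ref{OnEst}, and the same bound holds for the $\partial_{\tau_j}$ and $\partial_{\mu_{j'}}$ derivatives. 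In every remaining term at least one factor carries $1-\chi$, which forces $\rho(\bs(a))\ge H^{-k}e^{-k\omega\chi_0}$; then the inverse anchor $\nu^{-1}$, which by non-degeneracy (resp.\ uniform degeneracy) grows only like $(\omega'\rho)^{-\lambda'}$, contributes at most $e^{Mk}$, so Equations (\ref{nuEst1})--(\ref{nuEst2}) and (\ref{Off1})--(\ref{Off2}) yield the bound (\ref{OffEst}). Adding the two estimates and multiplying by $(k!)^{-1}t^k$ gives the summable majorant, so $Q\in C^1(\cG\times(0,\infty))$.

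For general $l$ I would run the same scheme by induction on $l$. The inputs to be upgraded are: the coordinate-change functions $\varphi,\psi$ of Equation (\ref{MultiODE}) (and $\tilde\varphi,\tilde\psi$), whose defining ODEs have smooth right-hand sides $v_{ji},v_{j'i}$ that reduce to constants on $\rM_1$ and hence, on a suitable $T(r,K)$, have all higher $(x,\mu,\tau)$-derivatives bounded by $\rho(x)e^{M\|\tau\|}$ — so differentiating the ODEs $l$ times and invoking Gronwall's inequality reproduces estimates of the shape of Lemma~\ref{DiffEstLem}, Equation (\ref{DiffEstVert}) and Theorem~\ref{ChainGrowthLem} with $l$-dependent constants; and the $t$-derivatives, which are harmless because for $N$ large $G_N$ and $R_N=(\partial_t+\Delta)G_N$ are smooth up to $t=0$ with $R_N$ vanishing to high order there, so differentiation under the simplex integral is legitimate and costs only polynomial-in-$k$ factors. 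Choosing $N=N(l)$ large then gives $C^l$ convergence; since the sum is independent of $N$ for $N>n/2$, its limit is $Q$, and $l\to\infty$ gives $Q\in C^\infty$.

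I expect the main obstacle to be reconciling the \emph{factorial gain} $(k!)^{-1}$ from the volume of $\Sigma_k$ with the \emph{exponential loss} in the derivative estimates for the $k$-fold groupoid multiplication near the singular stratum: as the Remark after Assumption~\ref{Hypo} notes, $|E_{\tau\cdot Y}|$ need not be uniformly bounded, so each extra multiplication can amplify derivatives, and one must verify both that this amplification is at worst $e^{Mk}$ and — crucially — that all $k$ multiplications can be carried out inside a single exponential chart, which is precisely what the cutoffs $\chi(2r^{-1}H^k\bs^*\rho)$ are engineered to guarantee via Theorem~\ref{ChainGrowthLem}. Away from $\cG_1$ the parallel difficulty is the $\rho^{-\lambda'}$ blow-up of $\nu^{-1}$; this is tamed by the same support consideration, which keeps $\rho\ge H^{-k}e^{-Ck}$ and so converts a power of $\rho^{-1}$ into an exponential in $k$. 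Once both regimes are controlled by the common bound $Ce^{Mk}$, the factorial wins and the series converges in every $C^l$.
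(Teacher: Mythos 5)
Your proposal follows essentially the same route as the paper: the Levi parametrix series with the $(k!)^{-1}t^k$ simplex volume beating the $C e^{Mk}$ derivative bounds, the cutoff decomposition into the near-$\cG_1$ regime handled by Theorem~\ref{ChainGrowthLem} and Lemma~\ref{OnEst} and the far regime handled via the anchor-map estimates (\ref{Off1})--(\ref{Off2}) under (uniform) non-degeneracy, and the extension to all $C^l$ by repeatedly differentiating the ODE (\ref{MultiODE}). Your identification of the factorial-versus-exponential tension and of the role of the $\chi(2r^{-1}H^k\bs^*\rho)$ cutoffs is exactly the mechanism the paper relies on.
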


\section*{Acknowledgements}
The author would like to thank Victor Nistor for many useful discussions. 
This project is supported by the AFR(Luxembourg) postdoctoral fellowship.

\end{document}